\newcounter{intro}
\newtheorem{theo}[intro]{Theorem}
\newtheorem{coro}[intro]{Corollary}
\newtheorem{thm}{Theorem}[section]
\newtheorem{lem}[thm]{Lemma}
\newtheorem{cor}[thm]{Corollary}
\newtheorem{defi}[thm]{Definition}
\newtheorem{rem}[thm]{Remark}
\newtheorem{rems}[thm]{Remarks}
\newtheorem*{merci}{Acknowledgements}
\newcommand{\cref}[1]{Corollary~\ref{#1}}
\newcommand{\lref}[1]{Lemma~\ref{#1}}
\newcommand{\tref}[1]{Theorem~\ref{#1}}
\DeclareMathOperator{\un}{\mathbf{1}}
\DeclareMathOperator{\eucl}{eucl}
\DeclareMathOperator{\ricci}{Ricci}
\DeclareMathOperator{\ricm}{Ric_-}
\DeclareMathOperator{\scal}{Scal}
\DeclareMathOperator{\dv}{dv}
\DeclareMathOperator{\vol}{vol}
\def\R{\mathbb R}\def\N{\mathbb N}\def\bS{\mathbb S}\def\bB{\mathbb B}\def\bC{\mathbb C}
\def\cC{\mathcal C}
\def\cD{\mathcal D}
\def\sch{Schr\"odinger }
\def\cU{\mathcal U}
\begin{document}
\title[Euclidean volume growth]  {Euclidean volume growth for complete Riemannian manifolds }
\author{ Gilles Carron }
\address{Laboratoire de Math\'ematiques Jean Leray (UMR 6629), Universit\'e de Nantes, CNRS,
2, rue de la Houssini\`ere, B.P.~92208, 44322 Nantes Cedex~3, France}
\email{Gilles.Carron@univ-nantes.fr}
\begin{abstract}
We provide an overview of technics that lead to an Euclidean upper bound on the volume of geodesic balls.  \par
R\'ESUM\'E: 
Nous donnons un aper\c cu des techniques qui conduisent \`a une borne sup\'erieure euclidienne sur le volume des boules g\'eod\'esiques.
\end{abstract}
\subjclass{Primary 53C21, 58J35, secondary: 58C40, 58J50 }
\date\today
\maketitle

\footnotetext{\emph{Mots cl\'es: }  croissance du volume,  }
\footnotetext{\emph{Key words: }  volume growth, .}

\section{Introduction}
In this  paper, we survey a number of recent results concerning the following question:  when does a complete Riemannian manifold $(M^n,g)$ has Euclidean volume growth, i.e. we are looking for estimates of the type 
\begin{equation}\label{EVG}
\tag{EVG}  \forall R>0 \ \colon\  \vol B(x,R)\le C R^n
\end{equation}
where the constant $C$ may depend on the point $x$ or not.  
 We will also obtain some new results and will give several examples that illustrate the optimality of certains of these results.
 
Such an estimate has some important consequences:
\begin{enumerate}[i)]
\item A complete Riemannian surface $(M^2,g)$ satisfying (\ref{EVG})  is parabolic. That is to say $(M^2,g)$ has no positive Green kernel: there is no  $G\colon M\times M\setminus\mathrm{Diag}\longrightarrow (0,\infty)$ such that $\Delta_y G(x,y)=\delta_x(y)$. We recommend the beautiful and very comprehensive survey on parabolicity written by A. Grigor'yan \cite{GriBAMS}. In dimension $2$, parabolicity is a conformal property and a parabolic surface with finite topological type\footnote{that is homeomorphic to the interior of a compact surface with boundary.} is conformal to a closed surface with a finite number of points removed : there is a closed Riemannian  surface $(\overline{M},\bar g)$, a finite set $\{p_1,\dots, p_\ell\}\subset \overline{M}$ and a smooth function $f\colon \overline{M}\setminus\{p_1,\dots, p_\ell\}\longrightarrow\R$ such that $(M^2,g)$ is isometric to $\left(\overline{M}\setminus\{p_1,\dots, p_\ell\}, e^{2f}\bar g\right).$ 
\item In higher dimension, the condition  (\ref{EVG}) implies that the manifold is $n-$parabolic. It is a non linear analogue of the parabolicity (\cite{CHS,holo1,holo2}).
\item According to R. Schoen, L. Simon and S-T. Yau \cite{SSY}, if a complete stable minimal hypersurface $\Sigma\subset \R^{n+1}$ with $n\in \{2,3,4,5\}$ satisfies the Euclidean volume growth (\ref{EVG}) , then $\Sigma$ is an affine hypersurface. In dimension $n=2$, M. Do Carmo and C.K. Peng proved  that a stable minimal surface in $\R^3$ is planar \cite{dCPeng}. But nothing is known in higher dimension.
\item If $M^n$ is the universal cover of a closed Riemannian manifold  $ \breve{M}$ and satisfies the Euclidean volume growth (\ref{EVG}) , then the fundamental group of $\breve{M}$ is virtually nilpotent \cite{Gromov}. 
\item Another topological implication is that if a complete Riemannian manifold $(M^n,g)$ is doubling: there is a uniform constant $\upgamma$ such that for any $x\in M$ and $R>0$: $\vol B(x,2R)\le C\upgamma \vol B(x,R)$, then $M$ has only a finite number of ends, that is to say there is a constant $N$ depending only of $\upgamma$ such that for any $K\subset M$ compact subset of $M$, $M\setminus K$ has at most $N$ unbounded connected components (\cite{CarronRMI}). In particular if $(M^n,g)$ satisfies a uniform upper and lower Euclidean volume growth: for any $x\in M$ and $R>0$: $$\uptheta^{-1} R^n\le  \vol B(x,R)\le \uptheta R^n$$ then $M$ has a finite number of ends.
\item In (\cite{TV1}), G. Tian and J. Viaclovsky  have obtained that if $(M^n,g)$ is a complete Riemannian manifold such that 
\begin{itemize}
\item $\forall x\in M, \forall R>0\colon \vol B(x,R)\ge cR^n$.
\item $\|\mathrm{Rm}\|(x)=o\left(d(o,x)^{-2}\right)$
\end{itemize} then $(M^n,g)$  satisfies  (\ref{EVG}) and it is an Asymptotically Locally Euclidean  space. This result was a key point toward the description of the moduli spaces of critical Riemannian metrics on manifolds of dimension $4$ (\cite{TV2}).

\end{enumerate}

We will review $3$ different technics that leads to (\ref{EVG}).
\begin{enumerate}[I-]
\item Comparison theorem and elaborations from the classical Bishop-Gromov comparison theorem.
\item Spectral theory and elaborations from a result of P. Castillon.
\item Harmonic analysis and the relevance of the concept of Strong $A_\infty$ weights of G. David and S. Semmes for conformal metrics.
\end{enumerate} 
In the next section,  we first give a short overview of these technics. More details and some proofs of new results will be given in specific sections. New results will labelled by letters (A,B...). The third section is devoted to news results obtained with comparison technics, the fourth section is devoted to the presentation of the results obtained from spectral theory, the application of  Strong $A_\infty$ weights  is described in the fifth section. The last section will be devoted to construction of news examples.

\begin{merci} I wish to thank S. Gallot and H. Rosenberg   for useful suggestions.   I thank the Centre Henri Lebesgue ANR-11-LABX-0020-01 for creating an attractive mathematical environment. I was partially supported by the ANR grants:  {\bf ANR-17-CE40-0034}: {\em CCEM} and {\bf ANR-18-CE40-0012}: {\em RAGE}.
\end{merci}
\section{Overview of the different technics and results}

\subsection{Comparison theorem}
 When $(M^n,g)$ is a complete Riemannian manifold, we defined $\ricm\colon M\longrightarrow \R_+$ by 
$\ricm(x)=0$ if $\ricci(x)\ge 0$ and if  $\ricci(x)$ has a negative eigenvalue then  $-\ricm(x)$ is the lowest eigenvalue of  $\ricci(x)$. Hence on a manifold with non negative Ricci curvature, we have $\ricm=0$. S. Gallot, P. Li and S-T. Yau, P. Petersen and G. Wei,  E. Aubry have obtained some refinement of the Bishop-Gromov comparison theorem under some integral bound on the negative part of the Ricci curvature \cite{Gallot,LY,PW,Aubry}. The proof of these volume estimates leads to the following new result:
\begin{theo}\label{logvol} Let $(M^n,g)$ be a complete Riemannian manifold of dimension $n\ge 3$. Assume that there is some  $\nu>n$ such that:
$$\int_M \ricm^{\frac{n}{2}}\dv<\infty\ \mathrm{and}\ \int_M \ricm^{\frac{\nu}{2}}\dv<\infty,$$ then there is a $R_0$ depending only on $n,\nu$, $\|\ricm\|_{L^\frac{n}{2}}$ and $\|\ricm\|_{L^\frac{\nu}{2}}$ such that if $x\in M$ then
$$\vol B(x,R)\le 2\omega_n R^n, R\le R_0$$
and 
$$\vol B(x,R)\le C(n,\nu) R^n\left(\log\left(\frac{2R}{R_0}\right)\right)^{\frac{n}{2}-1}, R\ge R_0.$$
\end{theo}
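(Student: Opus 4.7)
My plan is to push the Riccati--Bishop--Gromov comparison technique of Petersen--Wei and Aubry to the critical integrability $L^{n/2}$, using the extra $L^{\nu/2}$ hypothesis with $\nu>n$ to handle the small-scale contribution.

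Along any minimizing radial geodesic $\gamma_\theta$ emanating from $x$, write $A(t,\theta)$ for the Jacobian of $\exp_x$ and $H=A'/A$ for the radial mean curvature. Bochner gives the Riccati inequality $H'+H^2/(n-1)+\mathrm{Ric}(\gamma_\theta',\gamma_\theta')\le 0$. Comparison with the Euclidean model $H_0(t)=(n-1)/t$, together with the estimate $H+H_0\ge 2H_0$ wherever $H\ge H_0$, yields pointwise in $t$ (before the cut locus) the inequality $(t^2(H-H_0)_+)'(t)\le t^2\,\ricm(\gamma_\theta(t))$. Integrating twice and applying Fubini one gets the log-area bound
\[\log\bigl(A(R,\theta)/R^{n-1}\bigr)\le\int_0^R s\,(1-s/R)\,\ricm(\gamma_\theta(s))\,ds.\]
This is the starting inequality; the rest converts it into a volume estimate in the two regimes.

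I would then split the radial integral at a scale $R_0$ to be chosen. On $(0,R_0)$, apply H\"older with exponent $\nu/2$; since $\nu>n$ the resulting $s$-weight $s^{(\nu-2n+2)/(\nu-2)}$ is integrable at $0$ and one picks up a small factor $R_0^{2(1-n/\nu)}$ in front of $(\int_0^{R_0}s^{n-1}\ricm^{\nu/2}(\gamma_\theta(s))\,ds)^{2/\nu}$. On $(R_0,R)$, apply H\"older at the critical exponent $p=n/2$; here the $s$-weight is the scale-invariant $s^{-1}$, so the geometric factor becomes exactly $(\log(R/R_0))^{(n-2)/n}$ in front of $(\int_0^R s^{n-1}\ricm^{n/2}(\gamma_\theta(s))\,ds)^{2/n}$. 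Integrating in $\theta$ (using Jensen on the concave powers $2/\nu,2/n\in(0,1)$ to bring the spherical integral inside), one obtains a two-term upper bound on $\log(A(R,\theta)/R^{n-1})$. Choosing $R_0=R_0(n,\nu,\|\ricm\|_{L^{n/2}},\|\ricm\|_{L^{\nu/2}})$ so small that the first term is uniformly below $\log\sqrt{2}$ then gives the small-scale estimate $\vol B(x,R)\le 2\omega_n R^n$ for $R\le R_0$.

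For $R\ge R_0$ the crude exponentiation of the pointwise log-area bound is too wasteful, and to reach the sharp $(\log R)^{n/2-1}$ growth I would feed the pointwise inequality back into the linear Gronwall inequality $\chi(R,\theta)\le 1+(n-1)^{-1}\int_0^R s(1-s/R)\,\ricm(\gamma_\theta(s))\,\chi(s,\theta)\,ds$ satisfied by $\chi=A^{1/(n-1)}/t$, iterated on dyadic annuli $[2^kR_0,2^{k+1}R_0]$ with the critical H\"older estimate on each. Summing the geometric series produces a differential inequality of the form $d\psi^{2/(n-2)}/dR\le C(n,\nu)/R$ for the volume ratio $\psi(R)=\vol B(x,R)/(\omega_n R^n)$, which integrates to $\psi(R)\le C(\log(R/R_0))^{(n-2)/2}$, i.e. exactly the claimed $(\log(2R/R_0))^{n/2-1}$ factor. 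The main obstacle is this last step: extracting the sharp exponent $2/(n-2)$, which is precisely the codimension of $L^{n/2}$ among $L^{\nu/2}$. At criticality the standard Petersen--Wei volume comparison formally degenerates, and one must combine the logarithmic gains from H\"older on each dyadic annulus into the single overall $\log R$ factor of the final inequality. Apart from this delicate criticality analysis, the argument is a routine elaboration of the Gallot--Li-Yau--Petersen-Wei--Aubry integral Ricci-curvature comparison.
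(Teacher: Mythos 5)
Your starting point (the double integration of the Riccati inequality giving $\log\bigl(A(R,\theta)/R^{n-1}\bigr)\le\int_0^R s(1-s/R)\ricm\,ds$) is correct, and the two-scale H\"older decomposition does deliver the small-radius bound $\vol B(x,R)\le 2\omega_n R^n$ for $R\le R_0$ once $R_0$ is chosen small enough. The real content of the theorem is the large-$R$ bound, and that is where your argument has a genuine gap. The pointwise log-area inequality controls $A(R,\theta)$ \emph{exponentially} and \emph{pointwise in $\theta$}; to estimate $\vol B(x,R)=\int\!\!\int A$ you need control of the arithmetic mean of $A$ over the sphere, whereas Jensen applied to your inequality controls only the geometric mean. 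In other words, $\exp$ of the annular H\"older bounds can be enormous on a small $\theta$-set while the integral $\int_\theta A\,d\theta$ is the quantity you must bound. The proposed fix -- feeding the bound back into a ``linear Gronwall inequality'' for $\chi=A^{1/(n-1)}/t$ and summing over dyadic annuli to produce $d\psi^{2/(n-2)}/dR\le C/R$ -- is not substantiated: the equation $\chi'=\chi(H-H_0)/(n-1)$ just reproduces the exponential formula, and no manipulation you describe replaces an exponential-in-$\theta$ bound by the polynomial control needed to integrate over the sphere.

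The paper's route circumvents exactly this obstacle by never passing through pointwise exponentials. It works with $\Psi=(h-(n-1)/r)_+$ and the spherically-integrated quantity $\int_{\cD_r}\Psi^{p-1}J\,d\theta$ (Petersen--Wei), which obeys $\frac{d}{dr}(\Psi^{p-1}J)\le C(p,n)\ricm^{p/2}J$, and then applies H\"older to the area function $L(r)=\int_{\cD_r}J$. The decisive idea you are missing is the \emph{optimization in the exponent}: for each $p\in(n,\nu]$ one gets $L(r)\lesssim r^{p-1}+ (p-n)^{1-p/2}I_p\,r^{p-1}$, and the $(\log r)^{n/2-1}$ factor drops out by choosing $p=n+(\nu-n)/\log(er)$, a radius-dependent exponent that balances the blow-up of $(p-n)^{1-p/2}$ against the smallness of $r^{p-n}$. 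This is how the paper sharpens the estimate right up to the critical $L^{n/2}$ threshold; your scheme has no mechanism that reproduces this balancing and, as written, does not close.
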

\begin{rem}
 The statement is new but the proof follows from the one of S. Gallot, P. Li, S-T. Yau, P. Petersen, G. Wei and E. Aubry. \end{rem}
This result has the following corollary 
\begin{coro} In the setting of \tref{logvol}, the Riemannian manifold $(M^n,g)$ is $n-$parabolic.
\end{coro}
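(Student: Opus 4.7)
The plan is to combine \tref{logvol} with the classical volume-growth sufficient condition for $n$-parabolicity. The relevant criterion, due to Grigor'yan in the case $p=2$ and extended to all $p>1$ by Holopainen, Troyanov and Coulhon--Holopainen--Saloff-Coste (see the references cited in item ii) of the introduction), asserts that a complete Riemannian manifold is $n$-parabolic as soon as
$$\int^{+\infty}\left(\frac{t}{\vol B(x,t)}\right)^{\frac{1}{n-1}}dt=+\infty$$
for some (equivalently, every) $x\in M$. It is proved by testing the $n$-capacity of large annuli centered at $x$ with a radial Lipschitz cut-off whose profile is built directly from the integrand above.

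Granted this criterion, the corollary reduces to plugging in the bound of \tref{logvol}. For $R\ge R_0$ one has $\vol B(x,R)\le C(n,\nu)\,R^n\bigl(\log(2R/R_0)\bigr)^{n/2-1}$, hence
$$\left(\frac{R}{\vol B(x,R)}\right)^{\frac{1}{n-1}}\ge \frac{c(n,\nu)}{R\,\bigl(\log(2R/R_0)\bigr)^{\alpha}},\qquad \alpha:=\frac{n-2}{2(n-1)}.$$
The substitution $u=\log(2R/R_0)$ turns the tail integral $\int_{R_0}^{+\infty}(\cdots)\,dR$ into a positive multiple of $\int^{+\infty}u^{-\alpha}\,du$, and the complementary integral over $[1,R_0]$ is plainly finite in view of the first estimate of \tref{logvol}.

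The only point requiring attention is therefore the size of $\alpha$: one checks at once that $\alpha=\frac{n-2}{2(n-1)}<\frac{1}{2}$ in every dimension $n\ge 2$, so that $\int^{+\infty}u^{-\alpha}\,du=+\infty$. The parabolicity criterion is thus satisfied and $(M^n,g)$ is $n$-parabolic. There is no genuine obstacle in the proof; the entire content of the corollary is the observation that the logarithmic factor $(\log R)^{n/2-1}$ in \tref{logvol}, once raised to the power $1/(n-1)$ dictated by $n$-parabolicity, still produces a divergent integral.
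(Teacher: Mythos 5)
Your proof is correct, and since the paper does not spell out an argument for this corollary, you have supplied the missing details in exactly the way the author evidently intended: one applies \tref{logvol} together with the classical volume-growth criterion for $p$-parabolicity (Grigor'yan for $p=2$, and its $p>1$ extension due to Holopainen, Troyanov, Coulhon--Holopainen--Saloff-Coste, cited in \cite{CHS,holo1,holo2,GriBAMS}), namely that
$$\int^{+\infty}\left(\frac{t}{\vol B(x,t)}\right)^{\frac{1}{p-1}}\,dt=+\infty$$
forces $p$-parabolicity. Your computation of the exponent $\alpha=\frac{n-2}{2(n-1)}$ and the check that $\alpha<1$ (indeed $\alpha<\tfrac12$) so that $\int^{+\infty}u^{-\alpha}\,du$ diverges after the substitution $u=\log(2R/R_0)$ is exactly right. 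Two small remarks: the aside about the integral over $[1,R_0]$ being finite is superfluous, since the integrand is nonnegative and the question is only whether the tail integral diverges; and the borderline exponent for the criterion is $\alpha\le 1$, not $\alpha<\tfrac12$, so \tref{logvol} in fact leaves a comfortable margin --- which is consistent with the remark in the paper that the examples of \tref{Exvol} show the $(\log R)^{n/2-1}$ growth is actually achieved, yet such manifolds are still $n$-parabolic.
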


And this volume estimate also gives an improvement of \cite[Theorem 2.1]{CH}:
\begin{coro}\label{CH} Let $\Omega$ be a domain of $(M,g_0)$ a compact Riemannian manifold of dimension $n>2$. Assume $\Omega$ is endowed with a complete Riemannian metric $g$ which is conformal to $g_0$. Suppose moreover that for some $\nu>n:$
$$\int_M \|\ricci\|^{\frac{n}{2}}\dv_g<\infty\ \mathrm{and}\ \int_M |\ricm|^{\frac{\nu}{2}}\dv_g<\infty$$
Then there is a finite set $\{p_1,\dots,p_k\}\subset M$  such that
$$\Omega=M\setminus \{p_1,\dots,p_k\},$$
Moreover $(\Omega,g)$ satisfies the Euclidean volume growth (\ref{EVG}). \end{coro}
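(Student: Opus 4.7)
The plan is to combine Theorem \ref{logvol} with the conformal rigidity furnished by the compact ambient manifold $(M,g_0)$ to simultaneously identify $\Omega$ as a punctured compact manifold and upgrade the resulting volume estimate to a genuine Euclidean one.

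First, I would apply Theorem \ref{logvol} to $(\Omega,g)$. Since $\ricm\le\|\ricci\|$ pointwise, the first hypothesis gives $\int_\Omega \ricm^{n/2}\,\dv_g<\infty$, while the $\nu$-integrability of $\ricm$ is assumed. Theorem \ref{logvol} then yields the polylog volume estimate
$$\vol_g B(x,R)\le C R^n\bigl(\log(2R/R_0)\bigr)^{\frac{n}{2}-1},\qquad R\ge R_0,$$
uniformly in $x\in\Omega$.

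Next, I would identify $M\setminus\Omega$ as a finite set of isolated points. Writing $g=u^{4/(n-2)}g_0$ on $\Omega$, the completeness of $g$ forces $u\to+\infty$ at every point of $M\setminus\Omega$. The fundamental conformal identity
$$\|\ricci_g\|_g^{n/2}\,\dv_g\;=\;\|\ricci_g\|_{g_0}^{n/2}\,\dv_{g_0}$$
places $\ricci_g$ in $L^{n/2}(M,g_0)$, and $u$ satisfies the semilinear conformal-Laplacian equation
$$-\tfrac{4(n-1)}{n-2}\Delta_{g_0}u+\scal_{g_0}\,u\;=\;\scal_g\,u^{(n+2)/(n-2)}$$
on $\Omega$. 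Combining an $\varepsilon$-regularity argument for this equation (in the spirit of \cite{CH}) with the polylog volume bound just obtained — which prevents $M\setminus\Omega$ from carrying any positive-dimensional stratum — I would conclude that $M\setminus\Omega$ is a finite collection of isolated points $\{p_1,\dots,p_k\}$.

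Finally, to upgrade from the polylog to (\ref{EVG}), I would analyse the geometry of each end separately. Since the $L^{n/2}$ norm of $\ricci_g$ on $B_{g_0}(p_i,r)\setminus\{p_i\}$ tends to $0$ as $r\to 0$, a blow-up/$\varepsilon$-regularity analysis in the spirit of Tian--Viaclovsky \cite{TV1} shows that each end of $(\Omega,g)$ is asymptotically locally Euclidean, hence has Euclidean volume growth $\omega_n R^n(1+o(1))$. Summing over the finitely many ends and adding a bounded contribution from the compact core then yields $\vol_g B(x,R)\le CR^n$ for all $R>0$.

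The principal obstacle is the second step: extracting from the $L^{n/2}$ control on $\ricci_g$, the completeness of $g$ and the polylog bound of Theorem \ref{logvol} the conclusion that $M\setminus\Omega$ is not merely small (of vanishing $(n-2)$-capacity) but genuinely discrete. The polylog bound is essential here: it is precisely the sharpest a priori volume estimate obtainable from the integral curvature hypotheses alone, and it is what rules out the higher-dimensional singular strata that would otherwise be compatible with the $\varepsilon$-regularity for the conformal Laplacian.
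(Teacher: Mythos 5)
Your first two steps reproduce the paper's argument. Applying \tref{logvol} (which applies because $\ricm\le\|\ricci\|$ pointwise) gives $\vol_g B(x,R)\le CR^n(\log R)^{\frac n2-1}$, and since $\frac n2-1<n-1$ this is $o(R^n\log^{n-1}R)$; feeding that into Carron--Herzlich \cite[Theorem 2.1]{CH}, whose hypotheses are exactly completeness, $\int_\Omega\|\ricci_g\|^{n/2}\dv_g<\infty$ and a $o(R^n\log^{n-1}R)$ volume bound, yields $\Omega=M\setminus\{p_1,\dots,p_k\}$. You do not need to re-derive the $\varepsilon$-regularity yourself; the paper simply invokes that theorem, and you have correctly identified the polylog bound from \tref{logvol} as the missing hypothesis.

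The third step, however, has a genuine gap. A Tian--Viaclovsky blow-up analysis requires either a pointwise curvature decay $\|\Rm\|(x)=o\left(d(o,x)^{-2}\right)$ or, in integral-curvature variants, a uniform lower volume bound $\vol_gB(x,R)\ge cR^n$ together with a Sobolev inequality, in order to run $\varepsilon$-regularity and extract an ALE model at each end. None of these are available here: the hypotheses are purely integral bounds on Ricci, no lower volume bound has been established (and proving one is part of what is being asked, so this route is circular), and no Sobolev inequality has been assumed. Moreover, with only $L^{n/2}$ control of $\ricci_g$ and the (locally $L^{n/2}$, conformally invariant) Weyl tensor, one cannot conclude that the ends are asymptotically locally Euclidean; they need not be. What the paper actually does is invoke \cite[Theorem 1.6]{ACT} (the analogue on punctured compact manifolds of \tref{harmonicanalysis} above), whose proof goes through the strong $A_\infty$-weight machinery: one shows that $\dv_g$ is a strong $A_\infty$ weight under the scalar curvature bound $\int|\scal_g|^{n/2}\dv_g<\infty$ (implied by your Ricci hypothesis), and this directly yields the two-sided Euclidean estimate $C^{-1}R^n\le\vol_gB_g(x,R)\le CR^n$. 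That harmonic-analysis route is precisely designed to bypass the pointwise curvature control that your blow-up argument would need.
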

\begin{rems}\begin{itemize}
\item The hypotheses of \cite[Theorem 2.1]{CH} required moreover the estimate 
$$\vol_g B(o,R)=\mathrm{o}\left( R^n \log^{n-1}(R) \right).$$
According to \tref{logvol}, this volume estimate is implied by the other hypotheses.
\item The Euclidean volume growth is a consequence of \cite[theorem 1.6 ]{ACT} (see also \tref{HARn}-b).
\end{itemize} \end{rems}
We will give examples that illustrate that the conclusions of \tref{logvol} and \cref{CH} are optimal.
When $g$ is a Riemannian metric on a manifold $M$, the function $\sigma_-(g)\colon M\rightarrow \R_+$ is defined by 
$ \sigma_-(g)(x)=0$ if all the sectional curvature at $x$ are non negative and in the other case, $-\sigma_-(g)(x)$ is the lowest of the sectional curvature of $g$ at $x$.
\begin{theo}\label{Exvol}For any $n\ge 3$ and $R>3$,  there is a complete conformal metric $g_R=e^{2f_R}\mathrm{eucl}$ on $\R^n$ whose sectional curvatures satisfy: 
$$\sigma_-(g_R)\le C(n) \  \mathrm{and}\ \int_{\R^n} \sigma_-(g_R)^{\frac n2} \dv_{g_R}\le C(n)$$ and such that
$$\vol_{g_R}\left(B(o,R)\right)\ge R^n\left(\log R \right)^{\frac{n}{2}-1}/C(n),$$
where the positive constant $C(n)$ depends only on $n$
\end{theo}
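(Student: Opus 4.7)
My plan is to construct $g_R$ as a rotationally symmetric warped product $g_R = ds^2 + h_R(s)^2 g_{\bS^{n-1}}$ on $\R^n$, with $h_R\colon[0,\infty)\to[0,\infty)$ smooth satisfying $h_R(0)=0$, $h_R'(0)=1$, and $h_R>0$ on $(0,\infty)$. Any such metric is automatically conformal to $\mathrm{eucl}$: the change of variable $\log\tilde r = \int_1^s d\sigma/h_R(\sigma)$ together with $e^{f_R(\tilde r)}=h_R(s(\tilde r))/\tilde r$ exhibits $g_R$ in the form $e^{2f_R}\mathrm{eucl}$; completeness and surjectivity of $\tilde r$ onto $[0,\infty)$ are ensured as long as $\int_1^\infty ds/h_R(s)=+\infty$.

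From the warped-product formulas, the sectional curvatures are $-h_R''/h_R$ (on 2-planes containing $\partial_s$) and $(1-(h_R')^2)/h_R^2$ (on 2-planes tangent to the spherical slice), so
\[
\sigma_-(g_R)(s) = \max\bigl(0,\, h_R''/h_R,\, ((h_R')^2-1)/h_R^2\bigr),\qquad \vol_{g_R} B(o,R) = \omega_{n-1}\int_0^R h_R(s)^{n-1}\,ds.
\]
The three claimed properties then become explicit pointwise, integral, and volume lower-bound conditions on $h_R$.

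The heart of the argument is the design of $h_R$. A volume bound $\gtrsim R^n(\log R)^{n/2-1}$ forces $h_R(R)\sim R(\log R)^{(n-2)/(2(n-1))}$, but any monotone choice of this order (such as $h_R(s)=s(\log s)^{(n-2)/(2(n-1))}$) produces $\sigma_-\sim 1/s^2$ throughout $[0,R]$, making $\int \sigma_-^{n/2}\dv$ grow like $(\log R)^{n/2}$, which is not allowed. I would instead build $h_R$ piecewise from model segments on which $\sigma_-\equiv 0$: Euclidean cones $h=s$ (vanishing curvature), cylinders $h\equiv \mathrm{const}$ (only positive tangent curvature), and concave-down transitions with $h''<0$ (positive radial curvature). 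The negative-curvature ``belly'' of $h_R$ — naturally a piece of scaled hyperbolic warping $h_R(s)=A\sinh((s-s_0)/A)$ with $A$ and the $s$-interval length tuned appropriately — would carry all the $\sigma_-^{n/2}$ mass, arranged so that this mass is $O(1)$ uniformly in $R$. Beyond some $s_*(R)$, $h_R$ is extended as a cylinder of fixed radius, which leaves $\sigma_-=0$ and ensures completeness on $\R^n$.

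Once $h_R$ is fixed, verifying the three properties reduces to direct computations on each piece using the formulas above. The main obstacle I anticipate is precisely the design step: the pointwise bound $\sigma_-\le C(n)$, the integral bound $\int\sigma_-^{n/2}\dv\le C(n)$, and the volume lower bound $\vol_{g_R}B(o,R)\ge R^n(\log R)^{n/2-1}/C(n)$ are tight against one another, so the widths, heights, and placement of the hyperbolic belly and the adjoining cones and cylinders must be tuned to saturate each inequality simultaneously. This quantitative bookkeeping is where the $n$-dependent constant $C(n)$ in the statement is accumulated.
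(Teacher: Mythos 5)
Your proposal attempts a fundamentally different construction from the paper's, and it cannot work: no single rotationally symmetric warped product achieves the stated bounds uniformly in $R$.

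You are right that a monotone profile $h_R(s)\sim s(\log s)^{(n-2)/(2(n-1))}$ fails, and you suspect the constraints are merely ``tight against one another,'' hoping that a clever piecewise design saves the day. In fact they are incompatible. Write $g_R=ds^2+h^2(s)d\theta^2$ with $h(0)=0$, $h'(0)=1$, and set $M=\max_{[0,R]}h$, achieved first at some $s_1\le R$. The volume lower bound $\sigma_{n-1}\int_0^R h^{n-1}\,ds\ge R^n(\log R)^{\frac n2-1}/C(n)$ forces
$M\ge c(n)\,R(\log R)^{\beta}$ with $\beta=\frac{n-2}{2(n-1)}>0$; in particular $M\ge 2R$ for $R$ large. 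Since $\int_0^{s_1}(h'-1)\,ds=M-s_1\ge M/2$, the positive part satisfies $\int_0^{s_1}(h'-1)_+\,ds\ge M/2$, and the H\"older inequality with exponents $\frac n2,\frac{n}{n-2}$ gives
$$\frac M2\le\int_0^{s_1}(h'-1)_+\,ds\le\Bigl(\int_0^{s_1}(h'-1)_+^{n/2}h^{-1}\,ds\Bigr)^{2/n}\Bigl(\int_0^{s_1}h^{2/(n-2)}\,ds\Bigr)^{(n-2)/n}.$$
Since $h\le M$ on $[0,s_1]$ and $s_1\le R$, the last factor is at most $(RM^{2/(n-2)})^{(n-2)/n}$, whence
$$\int_0^{s_1}(h'-1)_+^{n/2}h^{-1}\,ds\ \ge\ \frac{1}{2^{n/2}}\Bigl(\frac MR\Bigr)^{\frac{n-2}{2}}.$$
But $((h')^2-1)_+\ge 2(h'-1)_+$, and $\bigl(((h')^2-1)_+/h^2\bigr)^{n/2}h^{n-1}=((h')^2-1)_+^{n/2}h^{-1}$ is exactly the spherical-sectional contribution to $\sigma_-^{n/2}\dv_{g_R}$, so
$$\int_{\R^n}\sigma_-(g_R)^{n/2}\dv_{g_R}\ \ge\ c(n)\Bigl(\frac MR\Bigr)^{\frac{n-2}{2}}\ \ge\ c(n)(\log R)^{\frac{(n-2)^2}{4(n-1)}}\ \longrightarrow\ \infty.$$
So the spherical sectional curvature (not the radial one you focus on when placing a ``hyperbolic belly'') is the real obstruction: any rotationally symmetric profile large enough to give the required volume necessarily carries unbounded $L^{n/2}$ negative curvature as $R\to\infty$.

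The paper avoids this entirely by giving up symmetry. Via \lref{Buildingblock2} it builds a model conformal factor $\uppsi_{\underline R,r}$ supported in a Euclidean ball of radius $r$, whose metric swallows a round hemisphere of radius $\underline R$ and has $\int\sigma_-^{n/2}\dv\lesssim(\log r)^{1-n/2}$. Choosing $\underline R=R$ and $r=\sqrt R$, it then places $N(R)\sim(\log R)^{n/2-1}$ disjoint translates of this factor in the annulus $\bB(0,4R)\setminus\bB(0,R)$ and sums them. Each bubble contributes volume $\sim R^n$ and only $\sim(\log R)^{1-n/2}$ to the curvature integral, so the totals come out as $R^n(\log R)^{n/2-1}$ and $O(1)$, respectively. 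The essential idea you are missing is this distribution of the expansion over many well-separated bubbles, which amortizes the $L^{n/2}$ cost of the negative curvature in a way that a single radial profile cannot.
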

\begin{theo}\label{ExCH}If $n\ge 3$, there is an infinite set $\Sigma\subset \bS^n$ and a complete conformal metric $g=e^{2f}\mathrm{can}$ on $\bS^n\setminus \Sigma$ 
whose sectional curvature are bounded from below  and such that 
$$\int_{\bS^n\setminus \Sigma} \sigma_-(g)^{\frac n2} \dv_{g}<\infty.$$\end{theo}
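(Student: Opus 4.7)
\textbf{Proof plan for \tref{ExCH}.} The plan is to perform a conformal surgery on the round sphere, creating a cylindrical cusp end near each point of a sequence $\{q_k\}_{k\ge 1}\subset\bS^n$ that converges to some $q_\infty$. I take $\Sigma:=\{q_k:k\ge 1\}$; the accumulation point $q_\infty$ will \emph{not} lie in $\Sigma$ and will remain a regular interior point of the constructed metric, because the modifications can be supported in pairwise disjoint geodesic balls $B(q_k,r_k)$ (with $r_k\searrow 0$) that all avoid $q_\infty$. Completeness at each $q_k$ will come from the cylindrical end, while completeness at $q_\infty$ is automatic since $g$ agrees with the round metric on a Swiss-cheese set containing $q_\infty$.

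Inside each ball $B(q_k,r_k)$ I prescribe a rotationally-symmetric conformal factor $e^{2f_k}$ so that, in the arc length $s$ of the resulting metric, the ball becomes a warped product $ds^2+\psi_k(s)^2\, g_{\bS^{n-1}}$ on $(-\infty,s_{k,\max}]$ with $\psi_k\equiv \alpha_k:=r_k/2$ on a half-line (the complete cylindrical cusp of cross-section $\alpha_k$), $\psi_k$ monotonically rising to $\sin r_k$ on a transition interval $[0,L_k]$, and smoothly matching the round profile $\sin\rho$ at $s=s_{k,\max}$ (so $\psi_k'$ goes from $0$ to $\cos r_k$ there, and all higher derivatives of $f_k$ vanish). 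Extending by $f=0$ outside $\bigcup_k B(q_k,r_k)$ gives the candidate metric $g=e^{2f}\mathrm{can}$ on $\bS^n\setminus\Sigma$. The parameters I will choose are $r_k\sim k^{-2}$ (small enough for disjointness), $\alpha_k\sim k^{-2}$, and a \emph{long} transition $L_k\sim k^{2}$.

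For the curvature analysis I use the standard warped-product formulas: in a bubble the sectional curvatures are $K_1=-\psi_k''/\psi_k$ (mixed planes) and $K_2=(1-(\psi_k')^2)/\psi_k^2$ (tangential planes). Since $\psi_k'\in[0,1]$ one has $K_2\ge 0$, so only $K_1$ contributes to $\sigma_-$. Some negative $K_1$ is unavoidable (because $\psi_k'$ must rise from $0$ to $\cos r_k>0$, forcing $\psi_k''>0$ on average in the transition), but by distributing $\psi_k''$ evenly one gets $\psi_k''\sim 1/L_k$ and the bound $|K_1|\lesssim 1/(L_k\alpha_k)$, which is uniformly bounded by my parameter choice $L_k\alpha_k\sim 1$. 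Integrating $|K_1|^{n/2}$ against the transition volume (of order $L_kR_k^{n-1}$ with $R_k=\sin r_k\sim r_k$) yields
\[
\int_{\mathrm{bubble}_k}\sigma_-(g)^{n/2}\,\dv_g\lesssim \frac{R_k^{n-1}}{L_k^{n/2-1}\alpha_k^{n/2}}\sim k^{4-2n},
\]
which is summable for $n\ge 3$.

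The main subtle point is that $\int|\ricci|^{n/2}\,\dv$ is invariant under homotheties of the metric, so per-bubble contributions cannot be made small merely by shrinking the bubbles: the \emph{shape} of $\psi_k$ is what matters. The resolution is to elongate the transition, taking $L_k/R_k\to\infty$, so that the (unavoidable) negative curvature is dissipated over a long cylindrical region. Once this is arranged, constructing each explicit profile $\psi_k$ with the prescribed boundary data and an evenly-distributed second derivative is a routine interpolation, and the estimates above close the argument.
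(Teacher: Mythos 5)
Your plan matches the paper's strategy in outline (glue infinitely many cusp-like ends with uniformly bounded $\sigma_-$ and summable per-bubble $L^{n/2}$ contribution), and you correctly flag that scaling alone cannot make the per-bubble integral small. But your concrete profile cannot exist: the parameter choice $r_k\sim\alpha_k\sim k^{-2}$, $L_k\sim k^2$ is self-contradictory, and no choice works once the spherical balls shrink. Indeed, ``evenly distributed'' $\psi_k''\sim 1/L_k$ with $\psi_k'(0)=0$, $\psi_k'(L_k)=\cos r_k\approx 1$ forces $\psi_k(L_k)-\psi_k(0)=\int_0^{L_k}\psi_k'\,ds\sim L_k/2$, which must equal $\sin r_k-\alpha_k\lesssim r_k\ll L_k$: impossible. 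More decisively, \emph{any} monotone profile meeting your boundary data has $|K_1|=\psi_k''/\psi_k\gtrsim r_k^{-2}$ somewhere. Let $s^*$ be the last time $\psi_k'=1/2$; on $(s^*,L_k]$ one has $\psi_k'>1/2$, hence $L_k-s^*\le 2\bigl(\psi_k(L_k)-\psi_k(s^*)\bigr)\le 2\sin r_k$, while $\psi_k'$ increases by $\cos r_k-\tfrac12\ge\tfrac14$ over that interval, so some $\psi_k''\ge 1/(8\sin r_k)$ there, and since $\psi_k\le\sin r_k$, some $|K_1|\ge 1/(8\sin^2 r_k)$. With $r_k\sim k^{-2}$ this gives $\sigma_-(g)\gtrsim k^4\to\infty$, so the sectional curvature is \emph{not} bounded below. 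The culprit is the exterior Dirichlet--Neumann data imposed by matching the round metric at $\partial B(q_k,r_k)$: you must reach $\psi_k'=\cos r_k\approx 1$ while $\psi_k$ stays $\le\sin r_k$, and these two conditions jointly pin down the curvature scale to $r_k^{-2}$. There is no ``elongate the transition'' escape inside a shrinking ball.

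The paper (\lref{Buildingblock1} and the proof of \tref{ExCH}) avoids this by passing to $(\R^n,\eucl)\cong(\bS^n\setminus\{N\},\mathrm{can})$ and placing the modifications in disjoint Euclidean balls $\bB(x_k,R_k)$ with $R_k\to\infty$; there the exterior matching data is $\psi(R_k)\approx R_k$, $\psi'(R_k)=1$, compatible with a long transition in which the pinch shrinks only logarithmically. The key profile is $j_R(t)=\ell(t)/\log R$ with $\ell$ convex and $\ell'(t)=\log|t|$ for $|t|\ge 2$; then $\sigma_-(h_R)=\ell''/\ell$ is bounded by a constant $C(n)$ independent of $R$, while the per-bubble integral is $\lesssim(\log R)^{1-n/2}\to 0$, so one chooses $R_k$ with $\sum_k(\log R_k)^{1-n/2}<\infty$. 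Also note that, transported to $\bS^n$, the accumulation point of $\Sigma$ is the north pole and it \emph{is} removed (it becomes a complete Euclidean end), rather than being a regular interior point as in your setup.
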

These constructions are slight modifications of examples furnished by S. Gallot and E. Aubry (\cite{Gallot,Aubry}).
\subsection{Harmonic analysis}
The Euclidean volume growth (\ref{EVG}) result in \cref{CH} is in fact a consequence of the following result (\cite{ACT})
\begin{thm}\label{harmonicanalysis} Let $g = e^{2f} \eucl$ be a conformal deformation of the Euclidean metric on $\R^n$ such that:
\begin{itemize}
\item $\vol(\R^n, g) = +\infty$,
\item $\displaystyle \int_{\R^n} | \scal_g |^{n/2}\dv_g < +\infty$.
\end{itemize}
Then there is constant $C$ such that any $g$-geodesic ball $B_g(x,R)\subset \R^n$ satisfies
$$C^{-1} R^n\le \vol_g B_g(x,R)\le C R^n.$$
 Hence $(\R^n,g)$ satisfies the Euclidean volume growth (\ref{EVG}).
\end{thm}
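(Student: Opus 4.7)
The plan is to invoke the theory of \emph{strong $A_\infty$ weights} of G. David and S. Semmes. Write $\dv_g = w\,dx$ with $w = e^{nf}$; after the Yamabe substitution $u = e^{(n-2)f/2}$ the conformal change formula becomes
$$-\frac{4(n-1)}{n-2}\,\Delta u \;=\; \scal_g\, u^{\frac{n+2}{n-2}}\qquad \text{on }\R^n,$$
and the hypothesis on $\scal_g$ reads $\int_{\R^n}|\scal_g|^{n/2}\,w\,dx < \infty$. The goal is reduced to showing that $w$ is a strong $A_\infty$ weight on $\R^n$, i.e. that the quasi-distance
$$\delta_w(x,y) \;:=\; w\bigl(B(\tfrac{x+y}{2},|x-y|)\bigr)^{1/n}$$
is bi-Lipschitz to a genuine metric on $\R^n$; by the David--Semmes theorem that metric will then be bi-Lipschitz to $d_g$ and will make $w\,dx$ Ahlfors $n$-regular, which is precisely the stated two-sided Euclidean volume bound.

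I would proceed in two stages. \emph{Stage 1 ($A_\infty$).} I apply Moser iteration to the Yamabe equation: on any Euclidean ball $B$ on which $\int_B |\scal_g|^{n/2}\,w\,dx$ is small, the potential $\scal_g\,u^{4/(n-2)}$ has small $L^{n/2}$ norm with respect to Lebesgue measure on $B$, so standard elliptic theory yields a Harnack inequality for $u$, hence a reverse H\"older inequality for $w = u^{2n/(n-2)}$. Absolute continuity of the integral $\int |\scal_g|^{n/2}\,w\,dx$ gives smallness on every sufficiently small Euclidean ball; the scale invariance of the Yamabe operator together with the infinite-volume hypothesis (which rules out $w$ collapsing at infinity) propagates these bounds to all scales, and yields $w\in A_\infty(\R^n)$. \emph{Stage 2 (strong $A_\infty$).} One must now promote measure-theoretic comparability into geometric comparability. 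On each ball with small $\|\scal_g\|_{L^{n/2}(g)}$, an $\varepsilon$-regularity argument, bootstrapping the PDE above, forces $f$ to be close to an affine function, so that the intrinsic $g$-metric on the ball is bi-Lipschitz to a Euclidean rescaling. Chaining such good balls produces curves joining arbitrary $x,y\in\R^n$ whose $g$-length is comparable to $\delta_w(x,y)$, which is the strong $A_\infty$ condition.

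Once $w$ is a strong $A_\infty$ weight, the David--Semmes theorem provides a metric $\rho_w$ on $\R^n$ that is bi-Lipschitz to $d_g$ and with respect to which $w\,dx$ is Ahlfors $n$-regular; combining this with $B_{\rho_w}(x,R) \simeq B_g(x,R)$ yields the desired estimate $C^{-1}R^n \le \vol_g B_g(x,R) \le C R^n$. The main obstacle is Stage~2: the strong $A_\infty$ property is substantially more delicate than $A_\infty$ and genuinely uses the \emph{geometric} content of the Yamabe equation (not merely the integrability of $\scal_g$), together with the global infinite-volume assumption, to guarantee that curves realising $\delta_w$-distances also realise $g$-distances up to a uniform multiplicative constant.
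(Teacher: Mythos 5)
Your proposal is a genuinely different route from the paper's, but it contains two gaps that would be fatal as stated.

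\textbf{Gap 1: propagation from small balls to all scales.} Writing the Yamabe equation $-\frac{4(n-1)}{n-2}\Delta u=\scal_g\,u^{\frac{n+2}{n-2}}$ as $-\Delta u = Vu$ with $V=\tfrac{n-2}{4(n-1)}\scal_g\,u^{4/(n-2)}$, one indeed has $\int_{\R^n}|V|^{n/2}\,dx=\mathrm{const}\cdot\int|\scal_g|^{n/2}\dv_g<\infty$, so absolute continuity gives $\|V\|_{L^{n/2}(B)}<\varepsilon$ on Euclidean balls of small \emph{Lebesgue} measure. But $L^{n/2}$ is the critical exponent for Moser iteration: the Harnack constant blows up as $\|V\|_{L^{n/2}(B)}$ approaches a dimensional threshold, and on large Euclidean balls the $L^{n/2}$-norm is merely bounded, not small. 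Your appeal to ``scale invariance of the Yamabe operator together with the infinite-volume hypothesis'' to cover all scales is exactly where the difficulty lives, and it is not addressed. Note that the paper explicitly remarks that the constant $C$ in the conclusion is \emph{not} controlled by $\|\scal_g\|_{L^{n/2}}$ alone; a uniform Harnack constant at all scales would contradict this.

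\textbf{Gap 2: from $A_\infty$ to strong $A_\infty$.} You acknowledge that this is ``the main obstacle,'' and it is: an $\varepsilon$-regularity statement that $f$ is close to affine on good balls, and a chaining of such balls to produce $g$-short curves realizing the quasi-distance $\delta_w(x,y)=w\bigl(\bB_{[x,y]}\bigr)^{1/n}$ up to a uniform constant, is a substantial lemma which is not developed and is not a standard consequence of the smallness of $\|\scal_g\|_{L^{n/2}(g)}$ on a single ball. Moreover your formulation of ``strong $A_\infty$'' (bi-Lipschitz metrizability of $\delta_w$) is weaker than the definition the paper actually uses, which directly compares $\mu\bigl(\bB_{[x,y]}\bigr)^{1/n}$ to the \emph{Riemannian} distance $d_g(x,y)$; an extra step is needed to pass from the David--Semmes abstract metric back to $d_g$.

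\textbf{What the paper does instead.} The paper's proof avoids $\varepsilon$-regularity entirely. Its Stage~1 is the analytic criterion of \cite{ACT}: if $\int_{\R^n}|df|^n\,dx<\infty$ then $e^{nf}\,dx$ is a strong $A_\infty$ weight (see \tref{HARn}~(b)). Since the given $f$ need not have $df\in L^n$, Stage~2 constructs an auxiliary conformal factor $\underline{f}$ solving
$$\Delta\underline{f}-\tfrac{n-2}{2}|d\underline{f}|^2=\tfrac{1}{2(n-1)}\scal_g e^{2f}\un_{\R^n\setminus\bB(R)}$$
for $R$ large, arranged to have $d\underline{f}\in L^n$, and Stage~3 shows $f-\underline{f}$ is bounded, so $e^{nf}$ and $e^{n\underline f}$ are comparable weights. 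This cleanly sidesteps both gaps in your plan: the global large-scale control is packaged into the $L^n$-gradient criterion for $\underline{f}$, and comparability of weights transfers the strong $A_\infty$ property to $e^{nf}$ without any ball-chaining.
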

The constant C here does not only depend on $\|\scal_g\|_{L^{\frac n 2}} $; but there is some $\upepsilon_n>0$ and some $C(n)$  such that if $\|\scal_g\|_{L^{\frac n 2}}<\upepsilon_n$ then any $g$-geodesic ball $B_g(x,R)\subset \R^n$ satisfies
$$C(n)^{-1} R^n\le \vol_g B_g(x,R)\le C(n) R^n.$$

The \tref{Exvol} shows the importance of the hypothesis on the control of the positive part of the scalar curvature.

This result is obtained using real harmonic analysis tools and in particular the notion of the strong $A_\infty$ weights which were introduced by G. David and S. Semmes (\cite{DS}). The original motivation was to find a characterization of weights that are  comparables with a quasiconformal Jacobian. The result of \cite{ACT} has been inspired by a similar study of Y. Wang who obtained in \cite{Wang} a similar result based on the $L^1$ norm of $\mathbf{Q}$ of the metric $g$, that is of $$\int_{\R^n} \left| \Delta^{\frac n2} f\right|(x) dx.$$

\subsection{Spectral theory} 
The study of volume growth estimate through spectral theory is motivated by the above question iii) about stable minimal hypersurfaces. Indeed let $M^n$ be a complete stable minimal hypersurface immersed in the Euclidean space $\R^{n+1}$ and let $\mathbf{I\!I}$ be its second fundamental form, the stability condition says that the \sch operator $\Delta_g-|\mathbf{I\!I}|^2$ is a non-negative operator, that is to say 
$$\int_M |\mathbf{I\!I}|^2\upvarphi^2\dv_g\le \int_M |d\upvarphi|_g^2\dv_g,\ \forall \upvarphi\in \cC_0^\infty(M).$$
But the Gauss-Egregium theorem implies that 
$$\ricci(\xi,\xi)=-\langle \mathbf{I\!I}(\xi), \mathbf{I\!I}(\xi)\rangle.$$
In particular, we have $$\ricm(x)\le \frac{n-1}{n} |\mathbf{I\!I}|^2$$ and the stability condition implies that \sch operator $\Delta-\frac{n}{n-1}\ricm$ is non negative.

In dimension $2$, a very satisfactory answer is given by the following very beautiful result of P. Castillon (\cite{Cast})
\begin{thm}\label{Castillon} Let $(M^2,g)$  be a complete Riemannian surface. Assume that  there is some $\lambda>\frac14$ such that the \sch operator $\Delta_g+\lambda K_g$ is non negative then there is a constant $c(\lambda)$ such that for any $x\in M$ and any $R>0$:
$$\mathrm{aera} \left(B(x,R)\right)\le c(\lambda) R^2.$$
Moreover such a surface is either conformally equivalent to $\bC$ or $\bC\setminus\{0\}$.
\end{thm}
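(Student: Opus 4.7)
The plan is to exploit conformal invariance in dimension $2$ by pulling the problem to the universal cover $\tilde M$ and invoking the uniformization theorem: $\tilde M$ is conformally equivalent to $\bC$ or to the unit disk $\mathbb D$ (we ignore the trivial compact case). Writing $\tilde g = e^{2u} g_0$ with $g_0$ the corresponding flat or hyperbolic model metric, the conformal identities
$$\int_{\tilde M} |d\varphi|_g^2 \, \dv_g = \int_{\tilde M} |d\varphi|_{g_0}^2 \, \dv_{g_0}, \qquad \int_{\tilde M} K_g \varphi^2 \, \dv_g = \int_{\tilde M} \bigl( K_{g_0} - \Delta_{g_0} u \bigr) \varphi^2 \, \dv_{g_0}$$
turn the spectral assumption into a weighted integral inequality involving only $u$ and the background $g_0$.

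The main step is to insert the substitution $\varphi = e^{\alpha u} \psi$ for a real parameter $\alpha$ and $\psi \in \cC^\infty_0(\tilde M)$, and to integrate the cross term by parts. The stability inequality then takes the form
$$\int e^{2\alpha u} \Bigl[ (\alpha^2 + 2\alpha \lambda) \psi^2 |\nabla u|_{g_0}^2 + 2(\alpha + \lambda) \psi \, \nabla u \cdot \nabla \psi + |\nabla \psi|_{g_0}^2 \Bigr] \dv_{g_0} \ge 0,$$
plus a harmless lower order term coming from $K_{g_0}$. Treating this as a quadratic form and performing a Fourier decomposition in the angular variable of the cylindrical model to isolate the zero mode reduces the matter to the one-dimensional Hardy inequality $\int |f'|^2 \, dr \ge \tfrac14 \int |f|^2 / r^2 \, dr$ on the half-line. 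The threshold $\lambda > 1/4$ appears here as the exact condition under which the Euler equation $-y'' = \lambda y / r^2$ admits no real power solutions, equivalently as the condition making the effective coefficient in the resulting weighted Hardy-type estimate strictly positive.

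Applying this refined inequality to a logarithmic cutoff $\psi$ supported on a large annulus of the cylindrical model, then unraveling the weight $e^{2\alpha u}$ and pushing down to $M$, yields the area estimate $\mathrm{area}(B_g(x, R)) \le c(\lambda) R^2$. For the rigidity, the quadratic volume growth makes $(M^2, g)$ parabolic by property i) of the Introduction, so its conformal class is parabolic and the case $\tilde M \simeq \mathbb D$ is ruled out. Thus $\tilde M \simeq \bC$, and the deck group of $\tilde M \to M$ lies in $\mathrm{Aut}(\bC)$ and acts freely and properly discontinuously; it therefore consists only of translations. The possible quotients are $\bC$, $\bC\setminus\{0\}$, or a flat torus, and the torus is excluded by non-compactness, leaving exactly the two stated conformal models.

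The main obstacle is exhibiting the sharp constant $1/4$: a direct Cauchy--Schwarz on the cross term gives only the much weaker (and trivial) condition $\lambda^2 > 0$, and recovering the sharp threshold $\lambda > 1/4$ requires the finer angular decomposition so that at the critical order only the zero Fourier mode survives, bringing the classical one-dimensional Hardy constant into play.
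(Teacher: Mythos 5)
Your approach is genuinely different from the one in the paper. The paper explicitly notes that Castillon's proof ``used strongly the Gauss--Bonnet formula for geodesic balls and the regularity of geodesic circles,'' and the adaptations given in Theorems~\ref{CartanH} and~\ref{symm} follow that pattern: take the test function $\upvarphi_R(y)=(R-d_g(x,y))^\alpha_+$, reduce the Rayleigh quotient to a one-variable integral in the $g$-radial coordinate via the coarea formula, use the second variation / Riccati structure of $A''(r)$ together with Gauss--Bonnet to control the curvature term, and integrate by parts. The threshold on $\lambda$ comes out of the choice of $\alpha>1/2$ in $(R-r)^\alpha$. Your proposal instead uniformizes, writes $g=e^{2u}g_0$, substitutes $\varphi=e^{\alpha u}\psi$, and hopes to close the argument via a Hardy inequality in background coordinates. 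That is a legitimately different plan, but as sketched it has two serious gaps.

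First, the Fourier decomposition step does not go through. After your substitution the quadratic form carries the non-constant weight $e^{2\alpha u}$ and the coefficient function $|\nabla u|_{g_0}^2$. Unless $u$ is rotationally symmetric on the model, decomposing $\psi$ into angular Fourier modes does \emph{not} diagonalize this form: the modes are coupled by the weight, and the reduction to the one-dimensional Hardy inequality $\int |f'|^2\ge\frac14\int f^2/r^2$ simply does not occur. That one-dimensional reduction is exactly what Castillon achieves by a different route, namely by testing with functions of the $g$-distance and using coarea in the metric $g$ itself, where the radial reduction is automatic.

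Second, and more fundamentally, you never tie the estimate to $g$-geodesic balls. The quantity $\mathrm{area}\,B_g(x,R)=\int_{\{d_g(x,\cdot)<R\}}e^{2u}\,\dv_{g_0}$ involves the weight $e^{2u}$ and the $g$-metric sublevel set, whereas your test functions are cutoffs supported on annuli in the \emph{background} geometry and the inequality you derive carries the weight $e^{2\alpha u}$. Forcing $\alpha=1$ to match the area weight gives $\alpha^2+2\lambda\alpha=1+2\lambda>0$, so the key coefficient has the wrong sign and a Cauchy--Schwarz on the cross term gives nothing; taking $\alpha\in(-2\lambda,0)$ to flip the sign produces the wrong weight $e^{2\alpha u}$ with $\alpha<0$. ``Unraveling the weight and pushing down to $M$'' hides exactly the step that does not close. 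Finally, a smaller remark on the rigidity: potential-theoretic parabolicity of $(M,g)$ does not by itself force $\tilde M\simeq\bC$; the clean route is to observe that the nonnegativity of $\Delta_g+\lambda K_g$ lifts to the universal cover (a positive solution of $Lh=0$ lifts), so the quadratic area bound holds on $(\tilde M,\tilde g)$ directly, and a noncompact simply connected parabolic surface is conformally $\bC$; only then does your deck-group argument give $\bC$ or $\bC\setminus\{0\}$.
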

\begin{rems}\begin{enumerate}[i)]
\item The non negativity condition on the  \sch operator $\Delta_g+\lambda K_g$ is equivalent to the fact that for every $\upvarphi\in \cC_0^\infty(M)$:
$$0\le \int_M \left[|d\upvarphi|^2+\lambda K_g\upvarphi^2\right]\mathrm{dA}_g.$$
\item A similar conclusion holds under the condition that  the  \sch operator $\Delta_g+\lambda K_g$ has a finite number of negative eigenvalue, or equivalently that there is a compact set $K\subset M$ such that for any $\upvarphi\in \cC_0^\infty(M\setminus M)$:
$$0\le \int_{M\setminus K} \left[|d\upvarphi|^2+\lambda K_g\upvarphi^2\right]\mathrm{dA}_g.$$
But in that case, there is a closed Riemannian  surface $(\overline{M},\bar g)$, a finite set $\{p_1,\dots, p_\ell\}\subset \overline{M}$ and a smooth function $f\colon \overline{M}\setminus\{p_1,\dots, p_\ell\}\longrightarrow\R$ such that $(M^2,g)$ is isometric to $\left(\overline{M}\setminus\{p_1,\dots, p_\ell\}, e^{2f}\bar g\right).$ 
\item This result is optimal; indeed the hyperbolic plane has exponential volume growth and the \sch operator $\Delta_g+\frac14 K_g=\Delta_g-\frac14 $ is nonnegative.
\end{enumerate}
\end{rems}
A natural question is about a higher dimensional analogue of \tref{Castillon}. However, the proof used strongly the Gauss-Bonnet formula for geodesic balls and the regularity of geodesic circles. Hence it is not clear wether it is possible to find an interesting generalization of this theorem. We will explain how the argument of Castillon can apply in the case of 3D Cartan-Hadamard manifolds \tref{CartanH} and of rotationally symmetric manifolds \tref{symm}. In particular this last result shows that it could be tricky to find examples that invalidate an extension of \tref{Castillon} result in  higher dimension.  In the recent paper \cite{Carron2016}, we have stress that a stronger spectral condition (a kind of non negativity in $L^\infty$ of the \sch operator $\Delta-\lambda\ricm$ for some $\lambda>n-2$) implies the Euclidean volume growth estimate (\ref{EVG}) . One of this result consequence thsi result (see  \tref{global}) is the following corollary that is based on a result of B. Devyver (\cite{DevKato}):
\begin{cor}If $(M^n,g)$ is a complete Riemannian manifold of dimension $n>2$ that satisfies the Euclidean Sobolev inequality \begin{equation*}
\forall \psi\in \cC_0^\infty(M)\colon\ \mu \left(\int_M \psi^{\frac{2n}{n-2}}\dv_g\right)^{1-\frac 2n}\le \int_M |d\psi|_g^2\dv_g.\end{equation*}Assume that $$\ricm\in L^{\frac{\nu_-}{2}}\cap L^{\frac{\nu_+}{2}}$$ where $\nu_-<n<\nu_+$, 
 then there is a constant $C$ such that for any $x\in M$ and $R>0$:
$$\vol B(x,R)\le C \,R^n.$$
\end{cor}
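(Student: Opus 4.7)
The plan is to decompose the statement as the concatenation of two results referenced in the excerpt: Devyver's theorem (\cite{DevKato}), which produces an $L^\infty$ positive ground state for a Schrödinger operator under a Sobolev inequality together with a two-sided Kato-type integrability on the potential, and the global spectral theorem \tref{global}, which promotes such $L^\infty$ non-negativity of $\Delta-\lambda\ricm$ for some $\lambda>n-2$ into the Euclidean bound $\vol B(x,R)\le CR^n$.

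The first task is to check that Devyver's hypotheses are fulfilled. The Euclidean Sobolev inequality is given directly; moreover, it forces a Green-kernel bound of the form $G(x,y)\le C\,d(x,y)^{2-n}$, which singles out the two-sided Kato class adapted to $(M,g)$: one needs integrability at an exponent strictly greater than $n/2$ to absorb the diagonal singularity of $G$, and at an exponent strictly smaller than $n/2$ to absorb its tail at infinity. The assumption $\ricm\in L^{\nu_+/2}\cap L^{\nu_-/2}$ with $\nu_-<n<\nu_+$ is exactly this two-sided Kato control on the potential $\lambda\,\ricm$. Devyver's result then yields, for couplings $\lambda$ in a suitable interval, a positive function $h$ satisfying $(\Delta-\lambda\ricm)h=0$ with $c^{-1}\le h\le c$, which is the $L^\infty$ non-negativity input required by \tref{global}.

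Once such an $h$ exists for some $\lambda>n-2$, a direct application of \tref{global} yields $\vol B(x,R)\le C R^n$. The main obstacle is therefore the quantitative step of verifying that Devyver's admissible interval for $\lambda$ extends past $n-2$. Its upper endpoint is governed by the Sobolev constant $\mu$ and the Kato norm of $\ricm$: by interpolation between the two hypotheses one has $\ricm\in L^{n/2}$, which gives quadratic-form non-negativity of $\Delta-\lambda\ricm$ for $\lambda<\mu/\|\ricm\|_{L^{n/2}}$, but the $L^\infty$ upgrade of \cite{DevKato} requires more. In practice one uses a perturbation argument: on the set where $\ricm$ is concentrated one exploits $L^{\nu_+/2}$ to keep the local Kato norm small, while on the complement one uses $L^{\nu_-/2}$ to control the tail at infinity. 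This is where the actual content of the corollary lives; the remainder is a clean assembly of the two black-boxed statements.
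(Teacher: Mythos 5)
Your decomposition is right: the paper's own justification of this corollary is a one‑line assembly of B.~Devyver's heat‑kernel/gaugeability result \cite{DevKato} with Theorem~\ref{global}, and you correctly identify the role of the two‑sided exponents $\nu_-<n<\nu_+$ in the Kato analysis (exponent above $n/2$ absorbs the local singularity $G(x,y)\sim d(x,y)^{2-n}$, exponent below $n/2$ absorbs the tail). So the skeleton is the paper's.

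However, there is a genuine gap in your last step, and it is exactly the step you flag as ``where the actual content lives.'' You propose a perturbation argument that ``keeps the local Kato norm small'' using the $L^{\nu_+/2}$ control, so that Devyver's admissible coupling interval stretches past $n-2$. But nothing in the hypotheses bounds the local Kato norm of $\ricm$: one only knows the norms $\|\ricm\|_{L^{\nu_\pm/2}}$, and a large bump of $\ricm$ on a fixed compact set is perfectly compatible with both, while making $\Delta-(n-2)(1+\delta)\ricm$ fail non‑negativity. Were your mechanism correct, the gaugeability constant $\gamma$ of $\Delta-(n-2)(1+\delta)\ricm$ (and hence the constant $C$ via Theorem~\ref{global}) would depend only on $n,\nu_\pm,\mu$ and the norms of $\ricm$. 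The paper explicitly says otherwise in the remark following the corollary: ``The constant $C$ here does not only depend on the Sobolev inequality constant and the $L^{\nu_\pm/2}$ norms of $\ricm$, it depends also on the geometry on some unknown compact subset $K\subset M$.'' That sentence tells you the intended route is not a direct norm‑controlled gaugeability on all of $M$. Rather, the two‑sided $L^p$ hypothesis makes $\ricm$ a \emph{relatively compact} perturbation, so $\Delta-(n-2)(1+\delta)\ricm$ has at most finitely many negative eigenvalues and is positive outside some compact $K$ (this uses the $L^{\nu_-/2}$ bound to make the exterior Kato quantity small for $K$ large, which is a statement about tails, not about local smallness). Devyver's result is then invoked to gaugify the operator at infinity, and Theorem~\ref{global} must be applied in its version tolerant of a finite negative spectrum; the gaugeability/comparison constant unavoidably picks up the geometry of $K$. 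Your proposal as written skips the finite‑Morse‑index reduction entirely, and the perturbation argument offered in its place does not produce gaugeability for $\lambda>n-2$ from the stated hypotheses.
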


The constant C here does not only depend on the Sobolev inequality constant and the $L^{\nu_\pm/2}$ norms of $\ricm$, it depends also on the geometry on some unknown compact subset $K\subset M$.

\section{Ricci comparison} Certainly, the most famous result that leads to an Euclidean volume growth estimate (\ref{EVG}) is the Bishop-Gromov comparison theorem : {\it If $(M^n,g)$ is a complete Riemannian manifold with non negative Ricci curvature then\footnote{where $\omega_n$ is the Euclidean volume of the Euclidean unit $n-$ball.} $ \forall x\in M, \forall R>0 \ \colon\  \vol B(x,R)\le \omega_n R^n$.} From a pointwise lower bound on the Ricci curvature, one gets estimates on other geometric and analytic quantity (isoperimetric profile, heat kernel estimate, Sobolev constant, spectrum of the Laplace operator). In 1988, S. Gallot showed that some geometric estimate could also be deduced from an integral estimate on the Ricci curvature \cite{Gallot}. The volume estimate has also been proven independently by P. Li and S-T. Yau \cite{LY}. Latter on, these results has been extended by P.Petersen and G. Wei \cite{PW} and A. Aubry \cite{Aubry}. We are now going to explain how the proof of these volume estimate can be  read in order to prove \tref{logvol}.\par
\begin{proof}[Proof of theorem A ]Ê We assume that $(M^n,g)$ is a complete Riemannian manifold of dimension $n$ such that for some $\nu>n$, we have
$$\int_M \ricm^{\frac{n}{2}}d\vol_g<\infty\ \mathrm{and}\ \int_M \ricm^{\frac{\nu}{2}}d\vol_g<\infty.$$
Then for every $p\in [n,\nu]$, we have also 
$$\int_M \ricm^{\frac{p}{2}}d\vol_g<\infty.$$ Hence we can assume that $n<\nu\le n+1$.  Let $\sigma_{n-1}$ be the volume of the rounded unit $(n-1)-$sphere and for $p>n$, we define:
$$C(p,n)=2\left(\frac{p-1}{p}\right)^{\frac{p}{2}}\ \left(\frac{(n-1)(p-2)}{p-n}\right)^{\frac{p}{2}-1}.$$
Note that the integral $ \int_M \ricm^{\frac{\nu}{2}}d\vol_g$ is not scale invariant hence by scaling we can assume that:\begin{equation}\label{scaled}
C(\nu,n) \int_M \ricm^{\frac{\nu}{2}}d\vol_g=(\nu-n)^{\nu-1}\left(2^{\frac{1}{\nu-1}}-1\right)^{\nu-1}\sigma_{n-1}.\end{equation}

Indeed, we can consider $R_0^{-2}g$ in place of $g$ where $R_0$ is defined by $C(\nu,n) R_0^{\nu-n}\int_M \ricm^{\frac{\nu}{2}}d\vol_g=(\nu-n)^{\nu-1}\left(2^{\frac{1}{\nu-1}}-1\right)^{\nu-1}\sigma_{n-1}.$

Let $x\in M$ and $\exp_x\colon T_xM\rightarrow M$ be the exponential map; using polar coordinate $(r,\theta)$ in $T_xM$ (where $r>0$ and $\theta\in \bS_x=\{u\in T_xM, g_x(u,u)=1\}$, we have
$$\exp_x^*d\vol_g=J(r,\theta)drd\theta.$$ For each $\theta\in \bS_x$, there is a positive real number $i_\theta$ such that the geodesic $r\mapsto exp_x(r\theta)$ is minimizing on $[0,i_\theta]$ but not on any larger interval. If $\cU=\{(r,\theta) \in (0,+\infty)\times \bS_x,\ r<i_\theta\}$ then $\exp_x\colon \cU\rightarrow \exp_x(\cU)$ is a diffeomorphism and $$\vol_g\left(M\setminus \exp_x(\cU)\right)=0.$$
For each $\theta\in \bS_x$ the function $h(r,\theta)=\frac{J'(r,\theta)}{J(r,\theta)}$ satisfies the differential inequation of Riccati's type :
$$h'+\frac{h^2}{n-1}\le \ricm.$$
In order to compare the behavior of the volume of geodesic ball to its Euclidean counterart, P. Petersen and G. Wei introduced :
$$\Psi(r,\theta)=\left(h(r,\theta)-\frac{n-1}{r}\right)_+$$ and they showed that on $(0,i_\theta)$, we have (in the barrer sense):
$$
\Psi'+\frac{\Psi^2}{n-1}+\frac{2}{r}\Psi\le \ricm$$

From this inequality, one deduces easily that 
\begin{align*}\frac{d}{dr}\left( \Psi^{\nu-1}J\right)&\le (\nu-1) \Psi'\Psi^{\nu-2}J+h\Psi^{\nu-1}J\\
&\le  (\nu-1)\left(-\frac{\Psi^2}{n-1}-\frac{2}{r}\Psi+ \ricm \right) \Psi^{\nu-2}J+\Psi^{\nu}J+\frac{n-1}{r}\Psi^{\nu-1}J\\
&\le\left((\nu-1) \ricm\Psi^{\nu-2}-\frac{\nu-n}{n-1}\Psi^{\nu}\right)J-\frac{2\nu-1-n}{r}\Psi^{\nu-1}J\\
&\le\left((\nu-1) \ricm\Psi^{\nu-2}-\frac{\nu-n}{n-1}\Psi^{\nu}\right)J.\end{align*}
Using the inequality:
$$ab^{\nu-2}\le \frac{2}{\nu}\left(\frac{a}{\epsilon}\right)^{\nu/2}+\frac{\nu-2}{\nu}\epsilon^{\frac{\nu}{\nu-2}}b^\nu,$$ one gets:
\begin{equation}\label{DE}\frac{d}{dr}\left( \Psi^{\nu-1}J\right)\le C(\nu,n)  \ricm^{\nu/2}J.\end{equation}
We introduce now the subset of the unit sphere $\cD_r=\{\theta\in \bS_x, r<i_\theta\}$ and 
$L(r)=\int_{\cD_r} J(r,\theta)d\theta$. So that we have
$$\vol B(x,R)=\int_0^R L(r)dr.$$
From the inequality (\ref{DE}) and the fact that $\Psi(r,\theta)$ is bounded near $r=0$, one easily deduce that 
\begin{equation}\label{IDE}
\int_{\cD_r} \Psi^{\nu-1}(r,\theta)J(r,\theta)d\theta\le C(\nu,n)\int_M \ricm^{\nu/2}(y)d\vol_g(y).
\end{equation}
Using the fact that $r\mapsto \cD_r$ is non increasing, we easily obtain that (in the barrer sense):
$$\frac{d}{dr}\left(\frac{L(r)}{r^{n-1}}\right)\le \int_{\cD_r} \Psi(r,\theta)\frac{J(r,\theta)}{r^{n-1}}d\theta.$$
And with H\"older inequality, one arrives to 
\begin{align*}\frac{d}{dr}\left(\frac{L(r)}{r^{n-1}}\right)^{\frac{1}{\nu-1}}&\le \frac{1}{\nu-1}\left(\int_{\cD_r} \Psi^{\nu-1}(r,\theta)\frac{J(r,\theta)}{r^{n-1}}d\theta\right)^{\frac{1}{\nu-1}}\\
&\le \frac{1}{\nu-1} r^{-\frac{\nu-1}{n-1}}\, \left(C(\nu,n)\int_M \ricm^{\nu/2}(y)d\vol_g(y)\right)^{\frac{1}{\nu-1}}.
\end{align*}
Hence one gets:
$$\left(\frac{L(r)}{r^{n-1}}\right)^{\frac{1}{\nu-1}}\le \sigma_{n-1}^{\frac{1}{\nu-1}}+\frac{1}{\nu-n} r^{\frac{\nu-n}{n-1}}\, \left(C(\nu,n)\int_M \ricm^{\nu/2}(y)d\vol_g(y)\right)^{\frac{1}{\nu-1}}.$$
With the assumption \eqref{scaled}, one gets that for any $r\in [0,1]$ then
\begin{equation}\label{volEsmallr}
L(r)\le 2\sigma_{n-1}r^{n-1}\ \mathrm{and}\ \vol B(x,r)\le 2\omega_n r^n.\end{equation}
In order to estimate the volume of balls of radius larger than $1$, we will used the same argument and get that for any $p\in (n,\nu]$ and any $r>1$:
$$\frac{d}{dr}\left( h^{p-1}J\right)\le C(p,n)  \ricm^{p/2}J.$$
This estimate was one of the key point in Gallot's work.
We let $$I_p=\int_M \ricm^{p/2}(y)d\vol_g(y)$$ and we obtain similarly:
\begin{align*}\frac{d}{dr}\left(L(r)\right)^{\frac{1}{p-1}}&\le \frac{1}{p-1}\left(\int_{\cD_r} h^{p-1}(r,\theta)J(r,\theta)d\theta\right)^{\frac{1}{p-1}}\\
&\le \frac{1}{p-1} \left(\int_{\cD_r} h^{p-1}(1,\theta)J(1,\theta)d\theta\right)^{\frac{1}{p-1}} + \frac{1}{p-1} \, \left(C(p,n)I_p\right)^{\frac{1}{p-1}}\\
 &\le\frac{1}{p-1} \left(\int_{\cD_1} h^{p-1}(1,\theta)J(1,\theta)d\theta\right)^{\frac{1}{p-1}} + \frac{1}{p-1} \, \left(C(p,n)I_p\right)^{\frac{1}{p-1}}.
\end{align*}
We have to estimate the first term, with H\"older inequality, we easily get 
$$ \left(\int_{\cD_1} h^{p-1}(1,\theta)J(1,\theta)d\theta\right)^{\frac{1}{p-1}}\le L(1)^{\frac{1}{p-1}-\frac{1}{\nu-1}}\left(\int_{\cD_1} h^{\nu-1}(1,\theta)J(1,\theta)d\theta\right)^{\frac{1}{\nu-1}},$$
 using $h(1,\theta)\le (n-1)+\Psi(1,\theta)$, we have
$$\left(\int_{\cD_1} h^{\nu-1}(1,\theta)J(1,\theta)d\theta\right)^{\frac{1}{\nu-1}}\le (n-1)L(1)^{\frac{1}{\nu-1}}+\left(\int_{\cD_1} \Psi^{\nu-1}(1,\theta)J(1,\theta)d\theta\right)^{\frac{1}{\nu-1}}.$$
But with (\ref{IDE}) and (\ref{scaled}), we have
$$\left(\int_{\cD_1} \Psi^{\nu-1}(1,\theta)J(1,\theta)d\theta\right)^{\frac{1}{\nu-1}}\le\left( C(\nu,n)I_\nu\right)^{\frac{1}{\nu-1}}=(\nu-n)\left(2^{\frac{1}{\nu-1}}-1\right)\sigma_{n-1}^{\frac{1}{\nu-1}},$$
so that 
$$\left(\int_{\cD_1} h^{\nu-1}(1,\theta)J(1,\theta)d\theta\right)^{\frac{1}{\nu-1}}\le 2^{\frac{1}{\nu-1}}(\nu-1)\sigma_{n-1}^{\frac{1}{\nu-1}},$$
and with (\ref{volEsmallr}), one gets
$$ \left(\int_{\cD_1} h^{p-1}(1,\theta)J(1,\theta)d\theta\right)^{\frac{1}{p-1}}\le 2^{\frac{1}{p-1}}(\nu-1)\sigma_{n-1}^{\frac{1}{p-1}}.$$
And we get the following inequality for $p>n$ and $r>1$:
\begin{align*}\left(L(r)\right)^{\frac{1}{p-1}}&\le \left(L(1)\right)^{\frac{1}{p-1}}+2^{\frac{1}{p-1}}\frac{\nu-1}{p-1}\sigma_{n-1}^{\frac{1}{p-1}}(r-1)+\frac{r-1}{p-1} \, \left(C(p,n)I_p\right)^{\frac{1}{p-1}}\\
&\le 2^{\frac{1}{p-1}}\sigma_{n-1}^{\frac{1}{p-1}}+2^{\frac{1}{p-1}}\frac{\nu-1}{p-1}\sigma_{n-1}^{\frac{1}{p-1}}(r-1)+\frac{r-1}{p-1} \, \left(C(p,n)I_p\right)^{\frac{1}{p-1}}\\
&\le 2^{\frac{1}{p-1}}(\nu-1)\sigma_{n-1}^{\frac{1}{p-1}}r+\frac{r-1}{p-1} \, \left(C(p,n)I_p\right)^{\frac{1}{p-1}}  \end{align*}
Using the inequality $(a+b)^{p-1}\le 2^{p-2}\left(a^{p-1}+b^{p-1}\right)$ and assuming that $n<p\le n+1$, one gets
$$L(r)\le 2^{n}n^{n}\sigma_n r^{p-1}+2^{p-2}\left(\frac{r-1}{p-1}\right)^{p-1} \, C(p,n)I_p.$$
One comes back to the definition of the constant $C(p,n)$ and we obtain the estimate
\begin{align*}
2^{p-2}\left(\frac{1}{p-1}\right)^{p-1} \, C(p,n)&=2^{p-1}\frac{1}{(p-1)^{p-1}}\left(\frac{p-1}{p}\right)^{\frac{p}{2}}\ \left(\frac{(n-1)(p-2)}{p-n}\right)^{\frac{p}{2}-1}\\
&=\frac{2^{p-1}}{p} \left(\frac{n-1}{p}\right)^{\frac{p}{2}-1}\left(\frac{p-2}{p-1}\right)^{\frac{p}{2}-1}(p-n)^{-\frac{p}{2}+1} \\
&\le\frac{2^{n}}{n}(p-n)^{-\frac{p}{2}+1}. \end{align*}
And one gets:
\begin{equation}\label{derMaj}
L(r)\le 2^{n}n^{n}\sigma_nr^{p-1}+\frac{2^{n}}{n}(p-n)^{-\frac{p}{2}+1}I_pr^{p-1}.\end{equation}
The idea is now to choose $p=n+(\nu-n)\frac{1}{\log(er)}=n+(\nu-n)\upepsilon$, where $\upepsilon=\frac{1}{\log(er)}$.
By H\"older inequality, one has
$$I_p\le I_{n}^{\frac{\nu-p}{\nu-n}}\, I_\nu^{\frac{p-n}{\nu-n}}\le  I_{n}^{1-\upepsilon}\, I_\nu^{\upepsilon}.
$$ 

We easily get the estimates 
$$r^{p-1}=r^{n-1} \exp\left( (\nu-n)\frac{\log(r)}{\log(er)}\right)\le e\, r^{n-1},$$
$$(p-n)^{-\frac{p}{2}+1}=\left(\frac{\log(er)}{\nu-n}\right)^{\frac{n}{2}-1}\,  \left((\nu-n)\upepsilon\right)^{-\frac{\nu-n}{2}\upepsilon}.$$
Using that $(\nu-n)\upepsilon\in (0,1]$ and that if $x\in (0,1]$ then $x^{-x}\le e^{\frac{1}{e}}\le 4$, one gets
 $$(p-n)^{-\frac{p}{2}+1}\le 2 \left(\frac{\log(er)}{\nu-n}\right)^{\frac{n}{2}-1}.$$
Now the second term in the right hand side of the inequality (\ref{derMaj}) is bounded above by:
$$\frac{2^{n+1}}{n} e A_{\nu,n}^\upepsilon r^{n-1}\left(\frac{\log(er)}{\nu-n}\right)^{\frac{n}{2}-1} \sigma_n \left(\frac{I_n}{\sigma_{n-1}}\right)^{1-\upepsilon}\, ,$$
where with our scaling assumption
$$A_{\nu,n}=\frac{I_\nu}{\sigma_{n-1}\, }=\frac{\left(2^{\frac{1}{\nu-1}}-1\right)^{\nu-1} (\nu-n)^{3\frac{\nu}{2}-2}} {2\left(\frac{\nu-1}{\nu}\right)^{\frac{\nu}{2}}\ \left((n-1)(\nu-2)\right)^{\frac{\nu}{2}-1}}.$$

Using $3\le n<\nu\le n+1$, one easily verifies
$$
\frac{\left(2^{\frac{1}{\nu-1}}-1\right)^{\nu-1}}{ 2\left(\frac{\nu-1}{\nu}\right)^{\frac{\nu}{2}}}=\left( 1-\frac{1}{2^{\frac{1}{\nu-1}}}\right)^{\nu-1}\left(1+\frac{1}{\nu-1} \right)^{\frac{\nu}{2}}\le 1.$$
Hence $A_{\nu,n}\le 1$ and letting  
$J:=\max\left\{ 1, \frac{I_n}{\sigma_{n-1}}\right\}$, we eventually obtain
$$L(r)\le \sigma_{n_1} r^{n-1}\left(2^{n}n^{n}+\frac{2^{n+1}}{n} e J\left(\frac{\log(er)}{\nu-n}\right)^{\frac{n}{2}-1}\right) $$
and 
$$\vol B(x,r)\le \omega_n r^{n}\left(2^{n}n^{n}+\frac{2^{n+1}}{n} e J\left(\frac{\log(er)}{\nu-n}\right)^{\frac{n}{2}-1}\right).$$
Hence we have shown that there are positive constant $\Gamma$ that depends only of $n,\nu, \int_M \ricm^{n/2}(y)d\vol_g(y)$ such that for any $r\le 1$:
$$\vol B(x,r)\le 2\omega_n r^n$$ and for any $r\ge 1$:

$$\vol B(x,r)\le \Gamma r^n\, \left(\log\left(e r\right)\right)^{\frac{n}{2}-1}.$$
\end{proof}

\begin{proof}[Proof of of first statement in \cref{CH}] The  Theorem 2.1 in \cite{CH} states that
if  $\Omega$ is a domain of $(M,g_0)$, a compact Riemannian 
manifold of dimension $n>2$ and if $g=e^{2f}g_0$
  is a complete Riemannian metric on $\Omega$  whose Ricci tensor satisfies
$$\int_{\Omega} \|\ricci_g\|^{\frac{n}{2}}(x)d\vol_g(x)<\infty$$
and such that for some point $x_0\in\Omega$:
$$\vol_g\, B(x_0,r) = o(r^n \log^{n-1} r),$$
then there is a finite set $\{p_1,...,p_k\}\subset M$ such that
$$\Omega=M-\{p_1,...,p_k\}.$$
Hence \tref{logvol} and this theorem implies the first statement of \cref{CH}.
\end{proof}
\section{Spectral assumptions}

\subsection{A formula}  The following formula is easy to show using the equation of Jacobi fields (see for instance \cite[lemme 1.2]{carma}).
\begin{lem} Let $(M^n,g)$ be a complete Riemannian manifold and let $\Sigma\subset M$ be a smooth compact hypersurface with trivial normal bundle and $\vec\nu\colon \Sigma\longrightarrow TM$ be a choice of unit normal vectors field. Let $\mathrm{I\!I}$ be the associated second fundamental form and $h=\mathrm{Tr} \,\mathrm{I\!I}$ be the mean curvature. If $\Sigma^r$ is the parallel hypersurface defined by:
$$\Sigma^r=\left\{\exp_x(r\, \vec\nu(x)); x\in \Sigma\right\}$$ then 
\begin{equation}\label{App}
\left.\frac{d^2}{dr^2}\right|_{r=0} \vol \Sigma^r=\int_{\Sigma} \left[H^2-|\mathrm{I\!I}|^2-\ricci(\vec\nu,\vec\nu)\right]d\sigma_g.\end{equation}
\end{lem}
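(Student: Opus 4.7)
The plan is to pass to Fermi coordinates along $\Sigma$. Since $\Sigma$ is compact with trivial normal bundle, the normal exponential map
\[
\phi\colon\Sigma\times(-\varepsilon,\varepsilon)\to M,\qquad \phi(x,r)=\exp_x(r\vec\nu(x)),
\]
is a diffeomorphism onto a tubular neighborhood for $\varepsilon>0$ small enough, and $\phi_r:=\phi(\cdot,r)$ parametrizes $\Sigma^r$. If $d\sigma_{g,r}$ denotes the Riemannian surface measure on $\Sigma^r$, write $\phi_r^*d\sigma_{g,r}=J(r,x)\,d\sigma_g(x)$ for the Jacobian, with $J(0,\cdot)\equiv 1$. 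Hence $\vol\Sigma^r=\int_\Sigma J(r,x)\,d\sigma_g(x)$ and, since $\Sigma$ is compact, it suffices to establish the pointwise identity
\begin{equation*}
\partial_r^2 J(r,x)\big|_{r=0}=h(x)^2-|\mathrm{I\!I}|^2(x)-\ricci(\vec\nu,\vec\nu)(x).
\end{equation*}

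Fix $x\in\Sigma$, an orthonormal basis $(e_1,\dots,e_{n-1})$ of $T_x\Sigma$, and the geodesic $\gamma_x(r)=\exp_x(r\vec\nu(x))$. Let $E_i(r)$ be the parallel transport of $e_i$ along $\gamma_x$, and let $Y_i$ be the Jacobi field along $\gamma_x$ coming from the geodesic variation $s\mapsto\exp_{c_i(s)}(r\vec\nu(c_i(s)))$ with $c_i(0)=x$ and $c_i'(0)=e_i$. Then $Y_i(0)=e_i$, $Y_i'(0)=\nabla_{e_i}\vec\nu$, and a standard computation gives $\la Y_i(r),\gamma_x'(r)\ra\equiv 0$. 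Writing $A_{ji}(r)=\la Y_i(r),E_j(r)\ra$, the identification $T_{\phi_r(x)}\Sigma^r=\mathrm{span}(E_1(r),\dots,E_{n-1}(r))$ shows $J(r,x)=\det A(r,x)$. The initial data read $A(0)=\mathrm{Id}$ and, with the convention $\mathrm{I\!I}(X,Y)=-\la\nabla_X\vec\nu,Y\ra$, $A'(0)=-\mathrm{I\!I}_x$ as a matrix in the basis $(e_i)$.

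Differentiating twice the identity $\partial_r\det A=\det A\cdot\tr(A^{-1}\partial_r A)$ and evaluating at $r=0$ gives
\begin{equation*}
\partial_r^2 J(0,x)=\bigl(\tr A'(0)\bigr)^2-\tr\bigl(A'(0)^2\bigr)+\tr A''(0)=h(x)^2-|\mathrm{I\!I}|^2(x)+\tr A''(0).
\end{equation*}
Finally, the Jacobi equation $Y_i''=-R(Y_i,\gamma_x')\gamma_x'$ yields $A''(0)_{ji}=-\la R(e_i,\vec\nu)\vec\nu,e_j\ra$, so $\tr A''(0)=-\sum_i\la R(e_i,\vec\nu)\vec\nu,e_i\ra=-\ricci(\vec\nu,\vec\nu)(x)$. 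Integration over $\Sigma$ completes the proof.

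There is no deep obstacle here: once $J(r,x)$ is identified with the determinant of the Jacobi endomorphism, the formula is purely algebraic. The only care required is the bookkeeping of sign conventions linking the shape operator with $\nabla\vec\nu$; but since the final expression involves only the sign-independent quantities $h^2$ and $|\mathrm{I\!I}|^2$, the answer is unambiguous.
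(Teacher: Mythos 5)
Your proof is correct and follows exactly the route the paper indicates (the paper itself gives no detailed argument, only the remark that the identity "is easy to show using the equation of Jacobi fields," with a citation). The Fermi-coordinate/Jacobian computation, the identity $\partial_r^2\det A|_{0}=(\tr A'(0))^2-\tr(A'(0)^2)+\tr A''(0)$, and the Jacobi equation for $A''(0)$ are precisely the ingredients meant; the only small wrinkle is that the paper writes $H^2$ while defining the mean curvature as $h$, which you have silently (and correctly) harmonized.
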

Using the Gauss Egregium theorem, one can give another expression for formula (\ref{App}). If $R_\Sigma$ is the scalar curvature of the induced metric on $\Sigma$ and $R_M$ the scalar curvature of $M$ and $K$ is the sectional curvature of $M$, then if $(e_1,\dots ,e_{n-1})$ is an orthonormal basis of $T_x\Sigma$ then
\begin{align*}
H^2-|\mathrm{I\!I}|^2-\ricci(\vec\nu,\vec\nu)&=R_\Sigma-\sum_{i,j} K(e_i,e_j)-\sum_{i=1}^{n-1}  K(e_i,\vec \nu)\\
&=R_\Sigma-\sum_{i=1}^{n-1}\ricci(e_i,e_i).
\end{align*}
In particular, if we let $\rho(x)$ be  the lowest eigenvalue of the Ricci tensor at $x$ then we get
\begin{equation}\label{Appu}
\left.\frac{d^2}{dr^2}\right|_{r=0} \vol \Sigma^r\le \int_{\Sigma} \left[R_\Sigma-(n-1)\rho\right]d\sigma_g.\end{equation}
\subsection{The case of 3D Cartan-Hadamard manifolds}
\begin{theo}\label{CartanH} Let $(M^3,g)$  be a Cartan-Hadamard manifold such that for some $\lambda>\frac12$, the \sch operator $\Delta_g+\lambda \rho$ is non negative then there is a constant $c(\lambda)$ such that for any $x\in M$ and any $R>0$:
$$\mathrm{aera} \left(B(x,R)\right)\le c(\lambda) R^3.$$
\end{theo}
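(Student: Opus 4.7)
The plan is to mirror Castillon's two-dimensional argument, replacing the use of Gauss--Bonnet on geodesic disks by the second variation formula (\ref{Appu}) on geodesic spheres. Write $V(r)=\vol B(x,r)$ and $A(r)$ for the area of $S(x,r)=\partial B(x,r)$, so that $V'(r)=A(r)$. Because $(M^3,g)$ is Cartan--Hadamard, $\exp_x$ is a global diffeomorphism and each $S(x,r)$ is a smooth embedded topological $2$-sphere; in particular $A$ is smooth on $(0,+\infty)$ with $A(0)=A'(0)=0$.

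I would first apply (\ref{Appu}) to the parallel family $\{S(x,r)\}$. Since $S(x,r)$ is $2$-dimensional, $R_\Sigma=2K_\Sigma$, and Gauss--Bonnet together with $\chi(S^2)=2$ gives $\int_{S(x,r)}R_\Sigma\,d\sigma=8\pi$; hence
\[
A''(r)\le 8\pi-2\int_{S(x,r)}\rho\,d\sigma\qquad\text{for every }r>0.
\]
Next I would test the spectral assumption against a radial function $\varphi=f(d(x,\cdot))$ with $f\in C^\infty_c([0,+\infty))$. The co-area formula rewrites the assumption as $\int_0^\infty f'(r)^2 A(r)\,dr\ge -\lambda\int_0^\infty f(r)^2 u(r)\,dr$, where $u(r)=\int_{S(x,r)}\rho\,d\sigma$. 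Combining with the pointwise bound $-u(r)\ge \tfrac12 A''(r)-4\pi$ from the previous step yields
\[
\int_0^\infty f'^2\,A\,dr\;\ge\;\frac{\lambda}{2}\int_0^\infty f^2\,A''\,dr-4\pi\lambda\int_0^\infty f^2\,dr.
\]

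Two integrations by parts, using $A(0)=A'(0)=0$ and the compact support of $f$, give the identity $\int f^2 A''\,dr=2\int(f'^2+ff'')A\,dr$ and transform the inequality into
\[
\lambda\int_0^\infty ff''\,A\,dr\;\le\;(1-\lambda)\int_0^\infty f'^2\,A\,dr+4\pi\lambda\int_0^\infty f^2\,dr.
\]
The last move is to insert the explicit radial choice $f(r)=(R-r)_+^\alpha$ (admissible by density as soon as $\alpha>1$): a direct substitution gives
\[
\alpha\bigl[(2\lambda-1)\alpha-\lambda\bigr]\int_0^R (R-r)^{2\alpha-2}A(r)\,dr\;\le\;\frac{4\pi\lambda}{2\alpha+1}R^{2\alpha+1},
\]
and since $(R-r)^{2\alpha-2}\ge(R/2)^{2\alpha-2}$ on $[0,R/2]$ one concludes $V(R/2)\le c(\lambda)R^3$.

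The hard part is the arithmetic hidden in the last display. The hypothesis $\lambda>\tfrac12$ is used \emph{exactly} to make the prefactor $(2\lambda-1)\alpha-\lambda$ strictly positive for some admissible $\alpha>1$ (any $\alpha>\lambda/(2\lambda-1)$ works), and conversely the argument collapses without it. The threshold $\tfrac12$ is the three-dimensional counterpart of the sharp bound $\tfrac14$ in Castillon's theorem, the factor of two coming from the $(n-1)=2$ that multiplies $\rho$ in (\ref{Appu}).
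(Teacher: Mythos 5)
Your proposal is correct and follows essentially the same route as the paper: apply the second-variation formula \eqref{Appu} together with Gauss--Bonnet on the geodesic spheres to get $A''(r)\le 8\pi-2\int_{S(x,r)}\rho\,d\sigma$, test the nonnegativity of $\Delta_g+\lambda\rho$ against the radial function $(R-d(x,\cdot))_+^\alpha$, integrate by parts, and choose $\alpha>\lambda/(2\lambda-1)$. After clearing denominators your prefactor $\alpha\bigl[(2\lambda-1)\alpha-\lambda\bigr]$ is exactly $\lambda\alpha^2\bigl(2-\tfrac1\alpha-\tfrac1\lambda\bigr)$, which is the constant appearing in the paper's computation, so the two arguments coincide up to repackaging of the integration by parts.
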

We are grateful to S. Gallot who suggests that Castillon's proof could be adapted in the setting of 3D Cartan-Hadamard manifolds.
\proof Recall that a Cartan-Hadamard manifold is a complete simply connected Riemannian manifold with non positive sectional curvature and on such a manifold the exponential map is a global diffeomorphism. In particular the geodesic sphere are smooth hypersurfaces. In the setting of \tref{CH}, we fixe $o\in M$ and consider $A(r)=\mathrm{aera}\left(\partial B(o,r)\right)$. It is a smooth function and $A(0)=0$ and $A'(0)=0$.
We define $\upxi(r)=(R-r)^\alpha$ with $\alpha>1/2$. Integrating by parts, we easily get :
$$\int_0^R A''(r)\upxi^2(r)dr=\int_0^R A(r)\left(\upxi^2\right)''(r)dr=2\, \frac{2\alpha-1}{\alpha}\int_0^R A(r)\left(\upxi'(r)\right)^2(r)dr.$$
If we define now 
$$\upvarphi_{R}(x)=\begin{cases}
(R-d(o,x))^\alpha & \mathrm{if}\ d(o,x)\le R\\
0& \mathrm{if}\  d(o,x)\ge R.
\end{cases}$$
We get 
$$2\,\frac{2\alpha-1}{\alpha} \int_M \left|d\upvarphi_R\right|^2 \dv=\int_0^R A''(r)\upxi^2(r)dr.$$
Using the formula (\ref{Appu}) and the Gauss-Bonnet formula one gets:
\begin{align*}
2\,\frac{2\alpha-1}{\alpha}\int_M \left|d\upvarphi_R\right|^2 \dv&\le 8\pi \int_0^R \upxi^2(r)dr-2\int_M \rho \upvarphi_R^2 \dv\\
&=8\pi \frac{R^{2\alpha+1}}{2\alpha+1}-2\int_M \rho \upvarphi_R^2 \dv.\end{align*}
Hence
$$
\,\left(2-\frac{1}{\alpha}\right)\int_M \left|d\upvarphi_R\right|^2 \dv+\int_M \rho \upvarphi_R^2 \dv\le 4\pi \frac{R^{2\alpha+1}}{2\alpha+1}\dv.$$
We choose $\alpha>1/2$ such that 
$$\frac{1}{\alpha}+\frac{1}{\lambda}<2,$$ the non negativity of $\Delta_g+\lambda \rho$ implies that $$0\le \frac{1}{\lambda}\int_M |d\upvarphi_R|^2\dv+\int_M\rho\upvarphi_R^2\dv,$$ and we obtain
$$\left(2-\frac{1}{\alpha}-\frac{1}{\lambda}\right) \int_M |d\upvarphi_R|^2\dv\le 4\pi \frac{R^{2\alpha+1}}{2\alpha+1},$$
with 
$$\alpha^2 \left(\frac{R}{2}\right)^{2\alpha-2}\vol B(o,R/2)\le \int_M |d\upvarphi_R|^2\dv$$
one obtains:
$$\vol B(o,R/2)\le \frac{2^{2\alpha}\pi\, \lambda}{\alpha (2\alpha+1)\left(\lambda(2\alpha-1)-1\right)} R^3.$$
\endproof

\begin{rems}\begin{enumerate}[i)]
\item A similar conclusion holds under the condition that  the  \sch operator $\Delta_g+\lambda \rho $ has a finite number of negative eigenvalue for some $\lambda>1/2$.
\item By comparison theorem, we already know that 
$$\vol B(o,R)\ge \omega_n R^3.$$
Hence in the setting of \tref{CH}, the volume of geodesic balls in uniformly comparable to $R^3$.
\item Again this result is optimal because for the hyperbolic space, the \sch operator $\Delta_g+\frac12 \rho =\Delta_g-1$ is non negative.
\end{enumerate}\end{rems}
\subsection{The case of rotationally symmetric manifold}
\begin{theo}\label{symm} We consider $\R^n$ endowed with a rotationally symmetric metric
$$(dr)^2+J^2(r)(d\theta)^2,$$
where $J$ is smooth with $J(0)=0$ and $J'(0)=1$. If for some $\lambda\ge \frac{n-1}{4}$ the \sch operator $\Delta+\lambda \rho$ is non negative, then
$$\vol B(0,R)\le c(n,\lambda) R^n.$$
\end{theo}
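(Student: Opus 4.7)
The plan is to adapt the argument of \tref{CartanH} to the rotationally symmetric setting. Setting $A(r) = \vol \partial B(0,r) = \sigma_{n-1} J(r)^{n-1}$, the key point is that $\partial B(0,r)$ is isometric to a round $(n-1)$-sphere of radius $J(r)$, hence has scalar curvature $R_\Sigma = (n-1)(n-2)/J(r)^2$, and $\rho$ depends only on $r$ by symmetry. Applying formula~(\ref{Appu}) at each $r>0$ then yields the pointwise bound
$$A''(r)\le (n-1)(n-2)\sigma_{n-1}\, J(r)^{n-3} - (n-1)\rho(r) A(r).$$

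I would then mimic the 3D proof with the radial test function $\upvarphi_R(x) = (R - d(o,x))_+^{\alpha}$ and $\upxi(r) = (R-r)^{\alpha}$ for $\alpha>1/2$ to be chosen later. Since $A(0)=A'(0)=0$ for $n\ge 3$ (the case $n=2$ being Castillon's \tref{Castillon}), and since $\upxi(R)=\upxi'(R)=0$, integration by parts gives $\int_0^R A''(r)\upxi^2(r)\,dr = 2\,\frac{2\alpha-1}{\alpha}\int_M |d\upvarphi_R|^2\,\dv$. Using the previous estimate on $A''$ together with the spectral assumption (rewritten as $\int_M|d\upvarphi_R|^2\dv \ge -\lambda\int_M\rho\,\upvarphi_R^2\dv$) to absorb the Ricci term, I expect to reach an inequality of the form
$$\left(4 - \frac{2}{\alpha} - \frac{n-1}{\lambda}\right)\alpha^{2}\int_0^R J(r)^{n-1}(R-r)^{2\alpha-2}\,dr \;\le\; (n-1)(n-2)\int_0^R J(r)^{n-3}(R-r)^{2\alpha}\,dr,$$
at which point the hypothesis $\lambda>(n-1)/4$ allows me to choose $\alpha$ large enough that the left-hand coefficient is strictly positive.

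To close the argument I would dispatch the $J^{n-3}$ on the right via H\"older's inequality with conjugate exponents $(n-1)/(n-3)$ and $(n-1)/2$, bounding the right-hand integral by $\bigl(\int_0^R J^{n-1}(R-r)^{2\alpha-2}\,dr\bigr)^{(n-3)/(n-1)}$ times an explicit polynomial factor of size $R^{(4\alpha+2n-4)/(n-1)}$. Rearranging the resulting inequality gives $\int_0^R J(r)^{n-1}(R-r)^{2\alpha-2}\,dr \le C(n,\lambda,\alpha)\, R^{2\alpha+n-2}$, and restricting to $r\in[0,R/2]$ (where $(R-r)^{2\alpha-2}\ge (R/2)^{2\alpha-2}$) finally yields $\vol B(0,R/2)\le c(n,\lambda) R^n$, and hence the desired bound on $\vol B(0,R)$.

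The main obstacle, absent from \tref{CartanH}, is that once $n\ge 4$ the integral $\int_{\partial B(0,r)} R_\Sigma\,d\sigma$ is no longer a topological constant coming from Gauss--Bonnet, but depends nontrivially on $J(r)$. Rotational symmetry is what makes it explicitly computable, and H\"older's inequality is what recovers the Euclidean growth. A secondary subtlety is that the strict inequality $\lambda>(n-1)/4$ appears naturally from choosing $\alpha$ finite, so the borderline case $\lambda=(n-1)/4$ would require a small additional refinement that this outline does not capture.
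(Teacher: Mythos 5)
Your proposal reproduces the paper's proof essentially verbatim: the same inequality $A''(r)\le (n-1)(n-2)\sigma_{n-1}J^{n-3}-(n-1)\int_{\partial B(0,r)}\rho\,d\sigma$ from formula~(\ref{Appu}), the same test function $\upvarphi_R=(R-d(o,x))_+^\alpha$, the same application of H\"older with exponents $\frac{n-1}{n-3}$ and $\frac{n-1}{2}$ to control $\int_0^R J^{n-3}(R-r)^{2\alpha}\,dr$, and the same rearrangement to get $\vol B(0,R/2)\le c(n,\lambda)R^n$. Your remark about the borderline case is also on point: the paper's proof in fact requires choosing $\alpha>1/2$ with $4-\frac{2}{\alpha}-\frac{n-1}{\lambda}>0$, which needs $\lambda>\frac{n-1}{4}$ strictly, so the hypothesis $\lambda\ge\frac{n-1}{4}$ in the statement is slightly stronger than what the proof as written handles.
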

\proof
We let $A(r)=\vol  \partial B(0,R)$, then $$A''(r)\le \sigma_{n-1}(n-1)(n-2) f^{n-3}(r)-(n-1)\int_{ \partial B(0,r)}\rho d\sigma.$$
Using the same function $\upvarphi_R$ one gets:
$$
\left(4-\frac{2}{\alpha} \right)\int_M |d\upvarphi_R|^2\dv+(n-1)\int_M\rho\upvarphi_R^2\dv
\le \int_0^R\sigma_{n-1}\gamma_n f^{n-3}(r)(R-r)^{2\alpha}dr$$
where $\gamma_n=(n-1)(n-2)$.
But using H\"older inequality, we also have
$$ \int_0^R f^{n-3}(r)(R-r)^{2\alpha}dr\le \left[\int_0^R (R-r)^{2\alpha-2} f^{n-1}(r)  dr\right]^{\frac{n-3}{n-1}}\left[\frac{R^{2\alpha+n-2}}{2\alpha+n-2} \right]^{\frac{2}{n-1}},$$
but
$$\int_0^R (R-r)^{2\alpha-2}\sigma_{n-1} f^{n-1}(r)  dr=\frac{1}{\alpha^2}\int_M |d\upvarphi_R|^2\dv.$$ Now one chooses $\alpha>1/2$ such that 
$0<4-\frac{2}{\alpha}-\frac{n-1}{\lambda}$, and one gets
$$\left(4-\frac{2}{\alpha}-\frac{n-1}{\lambda}\right)^{\frac{n-1}{2}}\,\int_M |d\upvarphi_R|^2\dv\le \frac{\gamma_n^{\frac{n-1}{2}}}{\alpha^{n-3}}\sigma_{n-1}\frac{R^{2\alpha+n-2}}{2\alpha+n-2}. $$
And the same argumentation yields 
$$\vol B(o,R/2)\le c(n,\lambda) R^n.$$
\endproof
\subsection{With a stronger spectral assumption}
On a non compact manifold, the behavior of the heat semigroup of a \sch operator may be very different on $L^2$ and on $L^\infty$. For instance, E-B. Davies and B. Simon have studied the case of the \sch operator $L_\lambda=\Delta-\lambda V$ on the Euclidean space $\R^n$ where the potential $V$ is defined by:$$V(x)=\begin{cases}
1/\|x\|^2 & \mathrm{if}\ \|x\| \ge 1\\
0& \mathrm{if}\   \|x\| < 1.
\end{cases}$$
When $\lambda\in (0,(n-2)^2/4)$, the operator $L_\lambda$ is non negative hence for any $t>0$:
$$\left\|e^{-tL_\lambda}\right\|_{L^2\to L^2}\le1.$$
Let $\alpha=\frac{n-2}{2}-\sqrt{ \left(\frac{n-2}{2}\right)^2-\lambda}.$
According to E-B. Davies and B. Simon \cite[Theorem 14]{DS}, we have that for any $\epsilon >0$ there are positive constants $c,C$ such that 
 $$c(1+t)^{\alpha-\epsilon}\le  \left\|e^{-tL_\lambda}\right\|_{L^\infty\to L^\infty}\le C(1+t)^{\alpha+\epsilon}.$$
 Recall that a \sch operator $L$ on a non compact Riemannian manifold is non negative if and only if there is a positive function $h$ solution of $Lh=0$ (\cite{Ag,MP}). 
\begin{defi} A
\sch operator $L$ is gaugeable with constant $\gamma\ge 1$ is 
there is a $h\colon M\longrightarrow \R$ such that 
$$Lh=0\ \,\mathrm{and}\,\  1\le h\le \gamma.$$
\end{defi}
Hence if a \sch operator $L$ is gaugeable, then it is non negative. One can also show that if  \sch operator $L$ is gaugeable with constant $\gamma$, then for any $t>0$, then $$ \left\|e^{-tL_\lambda}\right\|_{L^\infty\to L^\infty}\le\gamma.$$
One can even show that if  $(M,g)$ is stochastically complete\footnote{For instance (see \cite{GriBAMS}) when one has for some
$o\in M$:
$$\vol B(o,R)\le c e^{c R^2},$$}
and if $$\sup_{t>0} \left\|e^{-tL_\lambda}\right\|_{L^\infty\to L^\infty}=\gamma$$  then $L$ is gaugeable with constant $\gamma$.
In \cite{Carron2016}, we have shown the following result
\begin{thm}\label{global} If $(M^n,g)$ is a complete Riemannian manifold of dimension $n>2$ that satisfies the Euclidean Sobolev inequality \begin{equation*}
\forall \psi\in \cC_0^\infty(M)\colon\ \mu \left(\int_M \psi^{\frac{2n}{n-2}}\dv_g\right)^{1-\frac 2n}\le \int_M |d\psi|_g^2\dv_g\end{equation*} and such that for some $\delta>0$ the \sch operator $\Delta-(n-2)(1+\delta)\ricm$ is gaugeable with constant $\gamma$ then there is a constant $\uptheta$ depending only on $n, \delta, \gamma$ and the Sobolev constant $\mu$ such that for all $x\in M$ and $R\ge 0$:
$$ \frac{1}{\uptheta}\, R^{n}\le \vol B(x,R)\le \uptheta\, R^{n}.$$
\end{thm}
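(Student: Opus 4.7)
The plan is to treat the lower and upper volume bounds separately. The lower bound $\vol B(x,R)\ge R^n/\uptheta$ follows from the Euclidean Sobolev inequality alone, independently of the spectral hypothesis. Evaluating the Sobolev inequality on a Lipschitz cutoff $\chi$ supported in $B(x,2R)$, equal to $1$ on $B(x,R)$ and satisfying $|\nabla\chi|\le 2/R$, yields the Faber-Krahn type estimate
$$\mu\,\vol B(x,R)^{1-\frac{2}{n}}\le 4 R^{-2}\vol B(x,2R),$$
whose standard iteration produces the Euclidean lower bound $\vol B(x,R)\ge c(n,\mu)\,R^n$ uniformly in $x$ and $R$.

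For the upper bound I would exploit the gauge function $h\in[1,\gamma]$ satisfying $\Delta h=(n-2)(1+\delta)\ricm\cdot h$. The starting point is the Agmon ground-state identity: for every $\psi\in\cC_0^\infty(M)$,
$$\int_M|\nabla\psi|^2\dv - (n-2)(1+\delta)\int_M\ricm\,\psi^2\dv = \int_M h^2\left|\nabla(\psi/h)\right|^2\dv.$$
Combining this identity with the Sobolev inequality applied to $\psi/h$ and using $1\le h\le\gamma$ yields the \emph{enhanced Sobolev inequality}
$$\frac{\mu}{\gamma^2}\left(\int_M |\psi|^{\frac{2n}{n-2}}\dv\right)^{1-\frac{2}{n}} + (n-2)(1+\delta)\int_M\ricm\,\psi^2\dv \le \int_M|\nabla\psi|^2\dv.$$
This inequality couples a Euclidean Sobolev inequality with a quantitatively positive contribution from the negative Ricci potential, the strictness being provided by the hypothesis $\delta>0$.

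From the enhanced Sobolev inequality one extracts, via the Varopoulos-Davies-Nash machinery, an on-diagonal upper bound $p_t^L(x,y)\le C(n,\mu,\gamma,\delta)\,t^{-n/2}$ for the heat kernel of $L=\Delta-(n-2)(1+\delta)\ricm$. The ground-state conjugation $\tilde\Delta u = h^{-1}\Delta(hu) = \Delta u - 2\nabla\log h\cdot\nabla u$ is a conservative drift-Laplacian whose heat kernel is pointwise comparable to $p_t^L$ up to factors of $\gamma^2$. Applying Moser iteration to $\tilde\Delta$, whose Dirichlet form inherits the enhanced Sobolev inequality, produces a parabolic Harnack inequality and hence a matching on-diagonal lower bound $p_t^L(x,x)\ge c/\vol B(x,\sqrt{t})$. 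The gaugeability constraint $\int_M p_t^L(x,y)\dv(y)\le\gamma$, combined with these two-sided heat-kernel bounds, forces $\vol B(x,R)\le\uptheta\,R^n$ upon choosing $t=R^2$.

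The principal obstacle I anticipate is the transfer of parabolic Harnack-type estimates between the three operators $L$, $\tilde\Delta$, and $\Delta$ with constants depending only on $(n,\delta,\gamma,\mu)$ and not on the local geometry. The two-sided boundedness of the gauge $1\le h\le\gamma$, rather than mere positivity, is what makes this transfer quantitative; the strictness $\delta>0$ provides the margin required for the Moser iteration to converge and absorb the lower-order $\ricm$-terms produced by cutoff functions in the De Giorgi scheme.
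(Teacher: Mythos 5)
The paper does not prove this theorem: it is quoted from \cite{Carron2016}, so there is no internal proof to compare against. Your lower bound and the derivation of what you call the enhanced Sobolev inequality are both correct. The Agmon ground-state identity
$\int|\nabla\psi|^2\dv - \int V\psi^2\dv = \int h^2|\nabla(\psi/h)|^2\dv$ (with $V=(n-2)(1+\delta)\ricm$) does follow from $Lh=0$ by direct computation, and using $1\le h\le\gamma$ on both sides of the Euclidean Sobolev inequality applied to $\psi/h$ indeed yields the stated inequality with constant $\mu/\gamma^2$. This is a genuinely useful reformulation of the gaugeability hypothesis.

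The gap is in the final step for the upper bound. You claim that the on-diagonal lower bound $p_t^L(x,x)\ge c/\vol B(x,\sqrt{t})$, together with the mass bound $\int_M p_t^L(x,y)\dv(y)\le\gamma$ from gaugeability, forces $\vol B(x,R)\le\uptheta R^n$. It does not. Combining $p_t^L(x,x)\ge c/\vol B(x,\sqrt{t})$ with the Sobolev-derived upper bound $p_t^L(x,x)\le Ct^{-n/2}$ gives $\vol B(x,\sqrt{t})\ge (c/C)t^{n/2}$, which is again the \emph{lower} volume bound. Combining the on-diagonal lower bound with the mass bound gives $\gamma\ge\int_{B(x,\sqrt{t})}p_t^L\,\dv\gtrsim \bigl(c/\vol B(x,\sqrt{t})\bigr)\vol B(x,\sqrt{t})=c$, which carries no volume information whatsoever. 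The mechanism sketched in the paper's own Section 4.5 makes this precise: the upper volume bound requires a Gaussian \emph{lower} bound with the Euclidean prefactor, $H(t,x,y)\ge\upgamma\,t^{-n/2}e^{-d(x,y)^2/(ct)}$, and then one integrates against the total mass. Producing the prefactor $t^{-n/2}$ rather than $\vol B(x,\sqrt{t})^{-1}$ cannot be done by the Harnack inequality alone: the Grigor'yan--Saloff-Coste theory only yields two-sided bounds in terms of $\vol B(x,\sqrt{t})$, so your route is circular at precisely the point it needs to not be. In \cite{Carron2016} the replacement of $\vol B(x,\sqrt{t})^{-1}$ by $t^{-n/2}$ comes from a Li--Yau-type differential Harnack estimate established under the Kato condition, which provides a monotonicity-type argument anchored on the universal short-time asymptotic $p_t(x,x)\sim(4\pi t)^{-n/2}$; this is the core technical difficulty and is absent from your sketch.

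Two secondary gaps. First, ``Moser iteration produces a parabolic Harnack inequality'' from the enhanced Sobolev inequality is too fast: the parabolic Harnack inequality is equivalent to volume doubling plus the scale-invariant Poincar\'e inequality, and you have justified neither from the hypotheses; the Sobolev inequality alone does not give Poincar\'e. Second, the assertion that $\tilde\Delta$ is conservative (total heat kernel mass equal to $1$) requires proof — $\tilde\Delta\mathbf 1=0$ is algebraic, but stochastic completeness of $(M,h^2\dv)$ is an additional analytic fact; the bound $\le\gamma^2$ that does follow from $1\le h\le\gamma$ and $\|e^{-tL}\|_{\infty\to\infty}\le\gamma$ is enough for some purposes but must be stated.
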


\subsection{Volume growth and heat kernel estimates}
If $(M^n,g)$ is a complete Riemannian manifold, its heat kernel $H\colon (0,+\infty)\times M\times M\rightarrow (0,+\infty)$ is the Schwartz kernel of the operator $e^{-t\Delta}$:
$$\forall f\in \cC_0^\infty(M)\colon
\left(e^{-t\Delta}f\right)(x)=\int_M H(t,x,y)f(y)\dv_g(y).$$
Estimates of heat kernels is known to imply estimate on the volume of geodesic balls. For instance, the lower Gaussian bound: 
$$\forall t>0,x,y\in M\colon\ H(t,x,y)\ge  \upgamma\,t^{-\frac n 2}e^{-\frac{d(x,y)^2}{c t}}$$ implies
the (\ref{EVG}) conditions:
$$\forall x\in M, \forall R>0\colon \vol_g(B(x,R)\le c_n c^{-\frac n2}\upgamma^{-1} R^n. $$
 
This result is classical and a proof can be found in  \cite[Proof of Theorem 4.1]{C_JLMS}, we also recommend the nice survey of A. Grigor'yan \cite{Grisurvey2} about these relationships in the context of metric spaces.

\section{Conformal geometry and real harmonic analysis}
G. David and S. Semmes have introduced a refinement of the notion of Muckenhoupt $A_\infty$-weights. 
\begin{defi} A measure $d\mu=e^{nf}dx$ on $\R^n$ is said to be a \it{strong $A_\infty$-weight} if there is a positive constant $\uptheta$ such that:
\begin{enumerate}[i)]
\item for any Euclidean ball $\bB(x,R)$: $\mu\left(\bB(x,2R)\right)\le \uptheta\mu\left(\bB(x,R)\right).$
\item  if $d_f$ is the geodesic distance associated to the conformal metric $g=e^{2f}\eucl$ then for any $x,y\in \R^n$, $$d_f(x,y)^n/\uptheta \le \mu\left(\bB_{[x,y]}\right)\le \uptheta d_f(x,y)^n,$$
where $\bB_{[x,y]}$ is the Euclidean ball with diameter the segment $[x,y]$.
 \end{enumerate}
\end{defi}
\begin{rems}
\begin{enumerate}
\item If the definition, we assume that $f$ is a smooth function, but it is possible to define strong $A_\infty$-weight under the sole condition that $e^{nf}$ is locally integrable.
\item It is possible to define what is strong $A_\infty$-weight for Ahlfors regular metric measure space $(X,d,\nu)$ \cite{SemmesAin, Costea, KK}.
\end{enumerate}
\end{rems}
It turns out that conformal metrics induced by a strong $A_\infty$-weight have very nice properties.
\begin{thm}(\cite{DS}) Let $(\R^n, g=e^{2f}\eucl)$ be a conformal metric such that  $\dv_g= e^{n f}dx$ is a strong $A_\infty$ weight then there is a positive constant $\upgamma$ such that
\begin{enumerate}[i)]
\item for any $g_f$-geodesic ball $B(x,r)$ 
$$\upgamma^{-1}r^n\le  \vol_g(B(x,r))\le \upgamma r^n,$$
\item for any smooth domain $\Omega\subset \R^n$:
$$
\left(\vol_g(\Omega)\right)^{\frac{n-1}{n}} \leq  \upgamma \vol_g\left(\partial\Omega\right).$$
\end{enumerate}
\end{thm}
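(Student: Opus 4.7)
The plan is to extract from conditions (i) and (ii) a robust comparison between Euclidean balls and $d_f$-balls, and then to combine this with the Euclidean isoperimetric inequality.

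The key preliminary estimate I would establish is that for every Euclidean ball $\bB \subset \R^n$,
$$\diam_{d_f}(\bB)^n \asymp \mu(\bB),$$
with constants depending only on $n$ and $\uptheta$. For the upper bound, if $a,b \in \bB$ then the segment ball $\bB_{[a,b]}$ is contained in $2\bB$, so condition (ii) combined with doubling gives $d_f(a,b)^n \le \uptheta \mu(\bB_{[a,b]}) \le \uptheta^2 \mu(\bB)$. For the lower bound, take $a,b$ to be antipodal boundary points of $\bB$, so that $\bB_{[a,b]} = \bB$ and condition (ii) directly yields $d_f(a,b)^n \ge \mu(\bB)/\uptheta$.

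Conclusion (i) would then follow by sandwiching geodesic balls between Euclidean ones. For the lower bound on $\vol_g(B_g(x,r))$, I would pick $y$ with $d_f(x,y)$ close to $r$; a short triangle-inequality check gives $\bB_{[x,z]} \subset 2\bB_{[x,y]}$ for every $z \in \bB_{[x,y]}$, so doubling and condition (ii) force $d_f(x,z) \lesssim r$, that is $\bB_{[x,y]} \subset B_g(x, Cr)$, whence $\vol_g(B_g(x,r)) \gtrsim r^n$ after rescaling. For the upper bound, set $R = \sup\{|x-z|_{\eucl} : z \in B_g(x,r)\}$; for $z \in B_g(x,r)$ with $|x-z|_{\eucl}$ close to $R$, the Euclidean ball $\bB(x, R/2)$ is contained in a fixed dilate of $\bB_{[x,z]}$, whose $\mu$-measure is $\lesssim r^n$ by condition (ii), and doubling then forces $\mu(\bB(x,R)) \lesssim r^n$, so that $\vol_g(B_g(x,r)) \le \mu(\bB(x,R)) \lesssim r^n$.

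For the isoperimetric inequality (ii) I would exploit the diameter-measure equivalence above together with a Whitney-type decomposition of $\Omega$: on each Whitney piece one converts the Euclidean perimeter estimate into a weighted one by comparing $\diam_{d_f}$ to a local average of $e^f$, and then reassembles the local bounds via the chaining properties characteristic of strong $A_\infty$ weights. The main obstacle lies precisely here: passing from a Euclidean perimeter bound to a conformal one without losing the sharp exponent $(n-1)/n$ requires the full force of the David--Semmes analysis, most notably Semmes' factorization of strong $A_\infty$ weights as quasisymmetric Jacobians, whereas part (i) follows by fairly elementary manipulations of the two defining conditions.
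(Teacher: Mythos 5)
Your part (i) argument is essentially correct and is the standard elementary route: you sandwich $g$-geodesic balls between Euclidean balls using the two defining conditions of a strong $A_\infty$ weight, and Ahlfors $n$-regularity drops out. Two small points to tighten: in the upper bound you should first justify that $R=\sup\{|x-z|_{\eucl}:z\in B_g(x,r)\}$ is finite (it is, since otherwise $\mu(\bB_{[x,z]})$ would be unbounded while condition (ii) caps it by $\uptheta r^n$); and the lower bound presupposes a point $y$ with $d_f(x,y)$ comparable to $r$, which needs $r$ to be at most a fixed multiple of $\mathrm{diam}_{d_f}(\R^n)$. These are minor and standard caveats.

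The problem is part (ii). You appeal to a ``Semmes factorization of strong $A_\infty$ weights as quasisymmetric Jacobians,'' and no such theorem exists. Whether a given strong $A_\infty$ weight is comparable to a quasiconformal Jacobian is precisely the quasiconformal Jacobian problem, which David and Semmes posed but did not solve --- the paper itself records that finding such a characterization was the \emph{motivation} for introducing the class, not a result --- and Semmes subsequently produced strong $A_\infty$ weights on $\R^3$ that are \emph{not} comparable to any quasiconformal Jacobian, so the factorization you want is actually false in general. The David--Semmes proof of the isoperimetric/Sobolev inequality is direct: it establishes the weighted Sobolev inequality $\bigl(\int |u|^{n/(n-1)}e^{nf}\,dx\bigr)^{(n-1)/n}\lesssim \int |\nabla u|\,e^{(n-1)f}\,dx$ by potential and maximal-function estimates that exploit the doubling and segment-ball conditions, rather than by transporting the Euclidean inequality through a quasisymmetric change of variables. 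Your Whitney-decomposition-and-chaining sketch is too vague to assess on its own (Whitney cubes sit at positive distance from $\partial\Omega$, so it is unclear what ``Euclidean perimeter estimate on each piece'' means), but in any case the named ingredient is wrong and the argument cannot go through as written. Note finally that the paper states this theorem only as a citation of \cite{DS} and gives no proof, so there is no in-paper argument to compare yours against.
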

There are several analytic criteria on $f$ or geometric criteria on the conformal metric $g$  implying that the associated volume measure is a strong $A_\infty$ weight.
\begin{thm}\label{HARn} Let $(\R^n g=e^{2f}\eucl)$ be a conformal metric. Then any of the following hypotheses yields that $\dv_g= e^{n f}dx$ is a strong $A_\infty$ weight:
\begin{enumerate}[a)]
\item   $f=(1+\Delta)^{-s/2}v$ with $s\in (0,n)$ and $v\in L^{\frac{n}{s}}$, (\cite{BHS,BHS2}).
\item  $\int_{\R^n} |df|^n dx<\infty$, (\cite{ACT}).\par 
Let $\upgamma_n:=\frac 12 \int_{\bS^n} \mathbf{Q}_{\mathrm{rounded}}\dv_{\mathrm{rounded}}$.
\item The conformal metric  $g$ is normal and if its $\mathbf{Q}_g$-curvature satisfies
 $\int_{\R^n} \left(\mathbf{Q}_g\right)_+\dv_g< \upgamma_n$ and $\int_{\R^n} \left|\mathbf{Q}_g\right|\dv_g<\infty$, 
(\cite{W1,Wang}).
\item  The $\mathbf{Q}_g$-curvature satisfies
$\int_{\R^n} \left|\mathbf{Q}_g\right|\dv_g<\infty$, $\int_{\R^n} \left(\mathbf{Q}_g\right)_+\dv_g<\upgamma_n$  and if the negative part of the scalar curvature satisfies
$\int_{\R^n} \left(\mathrm{Scal}_g\right)_-^{\frac{n}{2}}\dv_g< \infty$, ( \cite{WW}).
\item We have $\vol(\R^n,g)=\infty$ and the scalar curvature satisfies
$\int_{\R^n} \left|\mathrm{Scal}_g\right|^{\frac{n}{2}}\dv_g< \infty,$ (\cite{ACT}).
\end{enumerate}
\end{thm}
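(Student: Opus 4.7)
The plan is to verify the two defining conditions of a strong $A_\infty$-weight for $d\mu = e^{nf}\,dx$: the doubling property on Euclidean balls, and the comparison $\mu(\bB_{[x,y]}) \asymp d_f(x,y)^n$. In each of the five cases (a)--(e), one should extract from the hypothesis enough uniform control of the conformal factor $f$ --- in the form of a Harnack-type oscillation estimate on Euclidean balls and a logarithmic asymptotic at infinity --- to make both checks go through in a unified way.

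For cases (c) and (d), the natural tool is the $\mathbf{Q}$-curvature equation $(-\Delta)^{n/2} f = \mathbf{Q}_g\,e^{nf}$, which for a normal metric with $\int (\mathbf{Q}_g)_+\,\dv_g < \upgamma_n$ yields the representation
$$f(x) = \frac{1}{\upgamma_n}\int_{\R^n} \log\frac{|y|}{|x-y|}\,\mathbf{Q}_g(y)\,e^{nf(y)}\,dy + C.$$
From this one derives both an asymptotic $f(x) = \alpha\log|x| + o(\log|x|)$ with $\alpha<1$ and a uniform oscillation bound for $f$ on Euclidean balls, from which doubling and the Jacobian comparison follow in a standard way. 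Case (a) is analogous: the Bessel potential hypothesis $f = (1+\Delta)^{-s/2} v$ with $v\in L^{n/s}$ gives BMO control of $f$, which via John--Nirenberg yields the same oscillation input. Case (b) is the most direct, since $|df|\in L^n$ produces VMO control of $f$ via Poincar\'e's inequality on Euclidean balls.

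For (d) and (e) one must also incorporate a $\scal_g$-integrability hypothesis. The route for (e) --- the weakest assumption --- would be to set $u = e^{(n-2)f/2}$, use the Yamabe form
$$-\frac{4(n-1)}{n-2}\Delta u = \scal_g\,u^{(n+2)/(n-2)},$$
and run a Moser iteration driven by $\|\scal_g\|_{L^{n/2}}$; the infinite volume hypothesis is needed both to preclude Euclidean pinching at infinity and to pin down the correct asymptotic branch, after which the same oscillation-plus-asymptotics machinery applies. Case (d) sits between (c) and (e): the finite $\mathbf{Q}$-curvature hypothesis gives the representation formula above up to a bounded defect controlled by $\|\scal_g\|_{L^{n/2}}$.

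The main obstacle in every case is the second (Jacobian-type) condition: doubling only needs averaged control of $f$, but comparing $\mu(\bB_{[x,y]})$ to $d_f(x,y)^n$ requires that the $g$-geodesic from $x$ to $y$ stay morally confined to $\bB_{[x,y]}$, i.e.\ that $g$ admits no far-away shortcuts. This forces one to promote ball oscillation estimates into uniform oscillation estimates on tubular neighbourhoods of Euclidean segments. In the absence of pointwise curvature bounds, this is precisely where the integrability hypotheses of \cite{BHS,BHS2,ACT,W1,Wang,WW} are genuinely exploited, and where the arguments for (a)--(e) diverge the most.
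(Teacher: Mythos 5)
The paper itself does not prove \tref{HARn}: each of the five items is attributed to a separate reference, and only item~(e) receives any discussion, in the form of the three-stage sketch for \tref{harmonicanalysis} appearing immediately after. That sketch diverges substantively from your proposal. The paper's route for~(e) is a reduction to item~(b): solve the modified scalar-curvature equation
\[
\Delta \underline{f}-\frac{n-2}{2}\bigl|d\underline{f}\bigr|^2=\frac{1}{2(n-1)}\,\scal_g\,e^{2f}\un_{\R^n\setminus\bB(R)}
\]
with the curvature cut off on a large ball, obtaining a conformal factor $\underline{f}$ with $d\underline{f}\in L^n$; then prove $f-\underline{f}$ is bounded (this is where the infinite-volume hypothesis enters); and finally use that the strong $A_\infty$ property of $e^{nf}dx$ is stable under bounded perturbations of $f$. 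Your Moser-iteration plan on the Yamabe equation for $u=e^{(n-2)f/2}$ would at best give local $L^\infty$ bounds on $u$ on Euclidean balls, and you do not indicate how local bounds would yield the global two-sided comparison $\mu(\bB_{[x,y]})\asymp d_f(x,y)^n$; in particular Moser iteration gives no obvious handle on the intrinsic distance $d_f$, which is an infimum over curves that may leave any fixed ball.

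The more serious gap is the reliance on BMO/VMO control of $f$ in cases (a) and (b). Mean-oscillation control of $f$ characterizes ordinary $A_\infty$ weights; David and Semmes introduced the \emph{strong} $A_\infty$ class precisely because $A_\infty$ (equivalently, $f$ in BMO with small norm) does \emph{not} imply the Jacobian-type comparison in condition (ii) --- there are $A_\infty$ weights that are not strong $A_\infty$. You flag this obstacle yourself in your final paragraph, but your earlier reasoning already rests on ``Bessel potential $\Rightarrow$ BMO $\Rightarrow$ strong $A_\infty$'' and ``$|df|\in L^n\Rightarrow$ VMO $\Rightarrow$ strong $A_\infty$,'' and both second arrows are false without further input. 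Item~(b) in particular is the engine behind the paper's proof of~(e) and is itself a delicate theorem of \cite{ACT}, not a corollary of Poincar\'e plus John--Nirenberg. Your logarithmic-potential representation for (c)--(d) is the correct structural tool and is stated correctly, but the decisive step --- passing from that representation to the lower bound $\mu(\bB_{[x,y]})\lesssim d_f(x,y)^n$, which forces control of $f$ along \emph{all} competitor curves and not just on the ball $\bB_{[x,y]}$ --- is exactly the step left unexplained, and is where all of the cited works do their real work.
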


The main steps of the proof of \tref{harmonicanalysis} are the following :
\begin{itemize}
\item[Stage 1:] Show that if $g=e^{2f}\eucl$ is such that $\int_{\R^n} |df|^n dx<\infty$, then $e^{n f}dx$ is a strong $A_\infty$ weight.
\item[Stage 2:] Study the scalar curvature equation 
$$\Delta f-\frac{n-2}{2}\left|df\right|^2=\frac{1}{2(n-1)} \scal_ge^{2f},$$ and for large $R>0$, find a solution of the equation $$\Delta \underline{f}-\frac{n-2}{2}\left|d\underline{f}\right|^2=\frac{1}{2(n-1)} \scal_ge^{2f}\un_{\R^n\setminus\bB(R)}$$ satisfying
 $d\underline{f}\in L^n$ 
\item[Stage 3:] Show that $f-\underline{f}$ is a bounded function on $\R^n$.
\end{itemize}
\section{Examples}
\subsection{}
The proof of \tref{ExCH} relies upon the follow family of metrics:
\begin{lem}\label{Buildingblock1} Let $n\ge 3$ and $R>3$, there is a warped product metric on $\R\times \bS^{n-1}$ 
$$h_R=dt^2+J_R(t)^2d\theta^2$$ such that 
\begin{itemize}
\item $\displaystyle\sigma_-(h_R)\le C(n)$
\item $\displaystyle \int_{\R\times \bS^{n-1}} \sigma_-(h_R)^{\frac n2} \dv_{h_R}\le C(n) \left(\log R\right)^{1-\frac n2}$
\item $([R,+\infty)\times \bS^{n-1}, h_R)$ and $(-\infty,-R]\times \bS^{n-1}, h_R)$ are isometric to $\left(\R^n\setminus \bB(\rho(R)), \eucl)\right)$ where
$\rho(R)<R$ and $$\lim_{R\to+\infty} \frac{\rho(R)}{R}=1.$$ 
\end{itemize}
Moreover there is a there is a smooth radial function $\upvarphi_R\in \cC^\infty(\R^n\setminus\{0\})$ such that 
\begin{enumerate}[i)]
\item $\upvarphi_R\ge 0$ 
\item  $\upvarphi_R=0$ on $\R^n\setminus\bB(\rho(R))\subset \R^n\setminus\bB(R)$ 
\item the Riemannian manifold $\left(\R\times \bS^{n-1},h_R\right)$ is isometric to  $\left(\R^n\setminus\{0\}, e^{2\upvarphi_R}\mathrm{eucl}\right)$. 
\end{enumerate}
\end{lem}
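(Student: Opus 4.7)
My plan is to construct the warping function $J_R$ explicitly, after which all the stated conclusions follow from direct computations in warped-product geometry. A useful preliminary is that every warped product on $\R \times \bS^{n-1}$ is conformally equivalent to $(\R^n \setminus \{0\}, e^{2\upvarphi}\eucl)$: setting $u(t) = \int_{t_0}^t J_R^{-1}(s)\,ds$ and $r = e^u$ turns $h_R$ into $(J_R/r)^2\,\eucl$, so $\upvarphi_R = \log(J_R/r)$. Picking the base point $t_0$ in the outer end, where $J_R(t) = t - a(R)$ is linear, gives $r(t) = t - a(R) = J_R(t)$ and hence $\upvarphi_R \equiv 0$ there. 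Moreover $d\upvarphi_R/dt = (J_R' - 1)/J_R$, so $\upvarphi_R$ is non-increasing in $t$ as soon as $J_R' \le 1$; this forces $\upvarphi_R \ge 0$ everywhere and reduces the lemma to the choice of $J_R$.

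I would take $J_R$ smooth and even on $\R$, with $J_R(t) = |t| - a(R)$ on $|t| \ge R$ (so the Euclidean ends are realised by $r = |t| - a(R)$, with $\rho(R) = R - a(R)$ and $a(R)$ chosen so that $\rho(R)/R \to 1$), and on $[0, R]$ strictly positive, monotone, with $J_R'(0) = 0$, $0 \le J_R' \le 1$, and all higher derivatives matching the linear profile at $t = R$. The constraint $|J_R'| \le 1$ makes the tangential sectional curvatures $(1 - (J_R')^2)/J_R^2$ non-negative, so $\sigma_-(h_R) = (J_R''/J_R)_+$ and only the radial direction contributes. The pointwise bound $\sigma_-(h_R) \le C(n)$ then reduces to uniformly bounding the amplitude of $J_R''/J_R$, while the integral bound amounts to
$$
\sigma_{n-1} \int_{-R}^R \left(\frac{J_R''}{J_R}\right)_+^{n/2} J_R^{n-1}\,dt \le C(n)(\log R)^{1-n/2}.
$$
Following the spirit of the constructions of Gallot and Aubry, I would arrange the interpolation so that the slope $J_R'$ ascends from $0$ to $1$ through a transition concentrated on a sub-interval of length $\asymp \log R$, with $J_R$ there comparable to the corresponding slice of the linear extrapolation.

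The hard part is exactly this calibration. The natural single-scale choices (parabolic, $\cosh$-type, or $\sqrt{t^2 + c^2}$-type bumps) all give a curvature integral of constant order in $R$, because $\int (J''/J)^{n/2}\,J^{n-1}\,dt$ is invariant under the rescaling $t \mapsto \lambda t$, $J \mapsto \lambda J$; the logarithmic decay therefore forces a genuinely two-scale construction — the radial scale $R$ and an internal transition scale $\asymp \log R$ — compatible with the boundary conditions at $t = 0$ and $t = R$ and with $J_R' \le 1$. Once this calibration is performed, all three bullets of the lemma together with the conformal description (with $\upvarphi_R \ge 0$ and $\upvarphi_R \equiv 0$ on the outer Euclidean end) follow from the computations indicated above.
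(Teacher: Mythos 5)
Your reduction to the conformal picture is correct and matches the paper: the change of variable $u(t)=\int_{t_0}^t J_R^{-1}$, $r=e^u$, giving $\upvarphi_R=\log(J_R/r)$ with $\tfrac{d}{dt}\upvarphi_R=(J_R'-1)/J_R$, and the observation that under $J_R''\ge0$, $|J_R'|\le 1$ one has $\sigma_-(h_R)=J_R''/J_R$ and $\upvarphi_R\ge 0$ after anchoring $\upvarphi_R\equiv 0$ on the outer linear end. So far this is exactly the paper's strategy (up to the mollification needed at the matching points, which you leave implicit). But the content of the lemma is the construction of $J_R$, and that is precisely where your sketch stops short and, as stated, would fail.

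You correctly note that $\int (J''/J)^{n/2}J^{n-1}\,dt$ is invariant under $t\mapsto\lambda t$, $J\mapsto\lambda J$, so any single-scale interpolation has a curvature integral of order one. But the fix you propose --- concentrating the slope transition on a sub-interval of length $\asymp\log R$, with $J_R$ comparable to the linear profile there --- is itself a single-scale interpolation, and the very scaling argument you invoke shows its contribution is again $O(1)$, not $O((\log R)^{1-n/2})$. (Check it: with $J''\approx 1/L$ and $J$ ranging from $\epsilon$ to $\approx L$ over a window of length $L$, the substitution $t\mapsto t/L$ gives a constant-order integral regardless of $L$ and $\epsilon$, and the pointwise constraint $J''/J\le C$ forces $\epsilon\gtrsim 1/L$.) So there is no choice of $L$ for which this works.

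The paper's key device is genuinely different: it is not a concentrated transition at all. One takes a convex even $\ell$ with $\ell(0)=1$ and $\ell'(t)=\log|t|$ for $|t|\ge 2$, and sets $j_R(t)=\ell(t)/\log R$ on $[-R,R]$, matched to $|t|-a(R)$ outside. Then $j_R'(t)=\log|t|/\log R$ ascends from $0$ to $1$ \emph{over the entire interval} $[2,R]$; the quotient $j_R''/j_R=\ell''/\ell$ is independent of $R$ and decays like $1/(t^2\log t)$; and the scale-invariant integrand works out to $\sim t^{-1}(\log t)^{n/2-1}(\log R)^{1-n}$, which integrates over $[2,R]$ to $\sim(\log R)^{n/2}\cdot(\log R)^{1-n}=(\log R)^{1-n/2}$. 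The two scales in play are $R$ (the length of the interpolation region and the asymptotic radius) and $1/\log R$ (the waist radius $j_R(0)$), not $\log R$. Your proposal is missing this specific logarithmic choice of $\ell'$, which is the one nontrivial idea of the lemma; without it the claimed bound cannot be reached.
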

\begin{proof}[Proof of \lref{Buildingblock1}] We start by the following observation:
If $h=dt^2+J(t)^2d\theta^2$ is a warped product on $\R\times \bS^{n-1}$ such that $J''\ge 0 $ and $|J'|\le 1$ then
$$\sigma_-(h)=\frac{J''}{J}.$$
Indeed the curvature operator of $h$ has two eigenvalues $$-\frac{J''}{J}\ \mathrm{and}\ \frac{1-J^2}{J^2}.$$

We consider a convex even function $\ell\colon\R\longrightarrow\R_+$ such that  
\begin{itemize}
\item $\ell(0)=1$.
\item If $|t|\ge 2$ then $\ell'(t)=\log|t|.$
\end{itemize}
It is easy to see that for any $t\in [-R,R]$
$$\ell(t)\le 1 +|t|\log R.$$
Hence there is a constant $\upgamma$ such that for any $R\ge 2$:
\begin{equation}\label{majinn2}
\int_{-R}^R \left(\ell''(t)\right)^{\frac n2}\left(\ell(t)\right)^{\frac n2-1}dt\le\upgamma \left(\log R\right)^{\frac n2}. 
\end{equation}
And for $R\ge 2$, we defined  $j_R\colon\R\longrightarrow\R_+$ by
$$j_R(t)=\begin{cases}
\ell(t)/\log(R)&\ \mathrm{if}\  |t|\le R\\
t-a(R)&\ \mathrm{if}\  |t|\ge R.
\end{cases}$$
where 
$a(R)=R-\frac{\ell(R)}{\log R}.$
By definition, $j_R$ is a $\cC^1$ function that is smooth on $\R\setminus\{-R,R\}$. The metric $k_R=dt^2+j_R(t)^2d\theta^2$ on $\left(\R\setminus\{-R,R\}\right)\times\bS^{n-1}$ satisfies
$$\sigma_-(k_R)(t,\theta)\le \frac{\ell''(t)}{\ell(t)} \le  \sup_{t\in \R} \ell''(t)=\sup_{t\in [0,2]} \ell''(t).$$
and with the estimation (\ref{majinn2}) we get that
$$\int_{\left(\R\setminus\{-R,R\}\right)\times\bS^{n-1}}\sigma_-(k_R)^{\frac n2} \dv_{k_R}\le  \sigma_{n-1}\gamma \left(\log R\right)^{1-\frac n2}.$$
Now $([R,+\infty)\times \bS^{n-1}, k_R)$ and $(-\infty,-R]\times \bS^{n-1}, k_R)$ are isometric to $\left(\R^n\setminus \bB(\rho(R)), \eucl)\right)$ where
$$\rho(R)=j_R(R)=\frac{\ell(R)}{\log R}$$ but we  have $$\ell(R)=\ell(2)+\int_2^R \log(t)dt=\ell(2)+R\log R-R-2\log 2+2.$$
and $\ell(2)\le 1+2\log 2$ hence for $R> 3$ we have $\ell(R)<R$ and we also have 
$$\lim_{R\to +\infty} \frac{\rho(R)}{R}=1.$$
We  now regularize the function $j_{R}$ while preserving the properties of the warped product metric. Let $\chi\in \cC^\infty(\R)$ such that
$\int_1^1 \chi(x)dx=1$ and 
$$\chi(t)=\begin{cases}1&\ \mathrm{if}\ t\le -1\\
0&\ \mathrm{if}\ t\ge 1.
\end{cases}$$
Let $\updelta\in (0,1)$, we let $S_\delta$ to be the even function defined by 
$$S_\delta(x)=\begin{cases}1&\ \mathrm{if}\ 0\le x\le 1-\updelta\\
\chi\left(\frac{x-1}{\updelta}\right)&\ \mathrm{if}\ x\in [1-\updelta, 1+\updelta]\\
0&\ \mathrm{if}\ x\ge 1+\updelta.
\end{cases}$$
Then $T_\updelta$ is the odd function defined by 
$$T_\updelta(x)=\int_0^x S_\updelta(\xi)d\xi.$$
We have 
$T_\updelta(x)=x$ if $|x|\le 1-\updelta$ and $$T_\updelta(x)=1-\updelta+\int_{1-\updelta}^{1+\updelta} \chi\left(\frac{\xi-1}{\updelta}\right)d\xi=1\ \ \mathrm{if}\ x\ge 1+\updelta.$$
We consider
$$J_{\updelta,R}(t)=\frac{1}{\log R}+\int_0^t T_\updelta\left(\frac{\ell'(\tau)}{\log R}\right)d\tau.$$
If $R^{1-\updelta}\ge 2$, then we have
\begin{itemize}
\item On  $[-R^{1-\updelta}, R^{1-\updelta}]$, $J_{\updelta,R}(t)=j_R(t)$.
\item On $[R^{1+\updelta},+\infty)$, $J_{\updelta,R}(t)=t-R^{1+\updelta}+J_{\updelta,R}\left(R^{1+\updelta}\right)$.
\end{itemize}
Moreover we always have
\begin{itemize}
\item $J_{\updelta,R}(t)\le j_{R}(t)$,
\item $\left|J'_{\updelta,R}(t)\right|\le 1$,
\item $0\le J''_{\updelta,R}(t)$,
\item If $t\ge 0:$ then $J''_{\updelta,R}(t)\le \frac{\ell''(t)}{ \log R}$.
\end{itemize}
Hence for the smooth metric $g_{\updelta, R}=(dt)^2+J_{\updelta,R}^2(t)d\theta^2$, we have
\begin{enumerate}[i)]
 \item For $t\ge 0$: $$\sigma_-(g_{\updelta, R})(t,\theta)=\frac{J''_{\updelta,R}(t)}{J_{\updelta,R}(t)}\le \frac{J''_{\updelta,R}(t)}{J_{\updelta,R}(0)} \le  \sup_{t\in \R} \ell''(t)=\sup_{t\in [0,2]} \ell''(t).$$
 \item 
\begin{align*}\int_{\R \times\bS^{n-1}}\sigma_-(g_{\updelta, R})^{\frac n2} \dv_{g_{\updelta, R}}&\le   2\sigma_{n-1}\int_0^{R^{1+\updelta}}  \left(J''_{\updelta,R}(t)\right)^{\frac n 2}\left(J_{\updelta,R}(t)\right)^{\frac n 2-1 } dt\\
&\le C_n (1+\updelta)^{\frac n2-1}\left(\log R\right)^{1-\frac n2}.\end{align*}
 \item $([R^{1+\updelta},+\infty)\times \bS^{n-1}, g_{\updelta,R})$ and $\left((-\infty,-R^{1+\updelta}]\times \bS^{n-1}, g_{\updelta,R}\right)$ are isometric to $\left(\R^n\setminus \bB(\rho(\updelta,R)), \eucl)\right)$ where
$ \rho(\updelta,R)=J_{\updelta,R}\left(R^{1+\updelta}\right)\le j_{R^{1+\updelta}}(R^{1+\updelta})<R^{1+\updelta}$ and
$$\lim_{R\to +\infty} \rho(\updelta,R)/R^{1+\updelta}=1$$
\end{enumerate}

It remains to demonstrate the last assertion. The radial function $\varphi_{\updelta,R}$ is defined by the equations
$$e^{\upvarphi_{\updelta,R}(r)}r=J_R\ \mathrm{and}\ e^{\upvarphi_{\updelta,R}(r)}\frac{dr}{dt}=1,$$
We easily get that 
$$\frac{d}{dt} \upvarphi_{\updelta,R}(r(t))=\frac{J_{\updelta,R}'-1}{J_{\updelta,R}}.$$ 
This implies that the function $r\mapsto \upvarphi_{\updelta,R}(r)$ is non increasing, hence choosing a solution of this differential equation that is zero for large positive $r$ we get that for all $r>0$: $\upvarphi_{\updelta,R}(r)\ge 0$.\par
The smooth metric $h_R$ will be defined by
$$h_R=g_{\updelta, T}$$ where 
$3^{\frac{1-\updelta}{1+\updelta}}=2$ and $T=R^{\frac{1}{1+\updelta}}$.
\end{proof}

\subsection{Proof of \tref{ExCH}}
As $(\bS^n\setminus\{N\},\mathrm{can})$ is conformally equivalent to $(\R^n,\mathrm{eucl})$. We are going to show that there is an infinite set $\Sigma\subset \R^n$ and a complete conformal metric $g=e^{2f}\mathrm{eucl}$ on $\R^n\setminus \Sigma$ with sectional curvature bounded from below and such that 
$$\int_{\R^n\setminus \Sigma} \sigma_-(g)^{\frac n2} \dv_g<\infty.$$
 We find a sequence of Euclidean balls $\left\{\bB(x_k,R_k)\right\}_k$ such that:
 \begin{itemize}
 \item $\displaystyle \forall k\colon R_k\ge 3$,
 \item $\displaystyle \sum_k \log(R_k)^{1-\frac n2}<\infty$,
 \item $\displaystyle \forall \ell\not=k\colon \bB(x_\ell,2R_\ell)\cap \bB(x_k,2R_k)=\emptyset$.
 \end{itemize}
 From the \lref{Buildingblock1}, for each $k$, one can find a smooth non-negative function $\upvarphi_k\in \cC^\infty_0(\R^n\setminus \{x_k\})$ such that 
 \begin{itemize}
 \item  $\upvarphi_k=0$ outside  $\bB(x_k,R_k)$,
  \item $(\R^n\setminus \{x_k\}, e^{2\upvarphi_k}\mathrm{eucl})$ is isometric to the metric $h_{R_k}$. \end{itemize}
 If we define $f=\sum \upvarphi_k$ then the Riemannian metric $g=e^{2f}\mathrm{eucl}$ satisfies the conclusion of \tref{ExCH}.
 
\endproof
\subsection{}
The proof of \tref{Exvol} relies on the following familly of Riemannian metric
\begin{lem}\label{Buildingblock2}Let $n\ge 3$ and $R \ge 3$ and $\underline{R}\in \left(\frac{1}{\log R}, +\infty\right)$. There is a   warped product metric on $\R^n$ 
$$g_{\underline{R},R}=dt^2+L_{\underline{R},R}(t)^2d\theta^2$$ such that 
\begin{itemize}
\item $\displaystyle\sigma_-(g_{\underline{R},R})\le C(n)$
\item $\displaystyle \int_{\R^n} \sigma_-(g_{\underline{R},R})^{\frac n2} \dv_{g_{\underline{R},R}}\le C(n) \left(\log R \right)^{1-\frac n2}$
\item $(\left[0,\frac{\pi}{2}\underline{R}\right]\times \bS^{n-1},g_{\underline{R},R} )$ is isometric to a rounded hemisphere of radius $\underline{R}$. \item There is some $r\in (R,2R+\pi\underline{R})$ such that $([r,+\infty)\times \bS^{n-1},g_{\underline{R},R} )$ is isometric to $\left(\R^n\setminus \bB(\rho(R)), \eucl\right)$.
\item The diameter of the ball $\{t\le r\}$ is bounded from above by $2\pi (\underline{R}+R).$
\end{itemize}
Moreover there  is a smooth non-negative function $\uppsi_{\underline{R},R}\in \cC^\infty_0(\R^n)$ such that 
$\uppsi_{\underline{R},R}=0$ on $\R^n\setminus\bB(R)$ and such that the Riemannian manifold $\left(\R^n,g_{\underline{R},R}\right)$ is isometric to  $\left(\R^n, e^{2\uppsi_{\underline{R},R}}\mathrm{eucl}\right)$.
\end{lem}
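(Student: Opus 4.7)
The plan is to build $L_{\underline R,R}$ by concatenating three pieces---a genuine spherical cap of radius $\underline R$ at the origin, a transition region adapted from the function $J_{\updelta,R}$ of the proof of \lref{Buildingblock1} (translated so as to start at height $\underline R$ with zero derivative), and a linear tail producing the Euclidean exterior---and then reading off the conformal representation $g_{\underline R,R}=e^{2\uppsi_{\underline R,R}}\eucl$ by the same radial ODE as at the end of the proof of \lref{Buildingblock1}. The assumption $\underline R>1/\log R$ is exactly what allows the translation of $J_{\updelta,R}$ to keep $L$ strictly positive.

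Explicitly, on $[0,\pi\underline R/2]$ set $L(t)=\underline R\sin(t/\underline R)$; this is smooth at $t=0$ because $L'(0)=1$, and ends with $L(\pi\underline R/2)=\underline R$, $L'(\pi\underline R/2)=0$. For $t\ge \pi\underline R/2$ set
$$L(t)=\underline R-\frac{1}{\log R}+J_{\updelta,R}(t-\pi\underline R/2),$$
which matches in $\cC^1$ at $t=\pi\underline R/2$ because $J_{\updelta,R}(0)=1/\log R$ and $J_{\updelta,R}'(0)=0$. Once $J_{\updelta,R}$ enters its linear regime (for $t-\pi\underline R/2\ge R^{1+\updelta}$), $L$ is linear of slope $1$ and the corresponding slab is isometric to a Euclidean exterior. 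The only point where smoothness fails is $t=\pi\underline R/2$: $L''$ jumps from $-1/\underline R$ on the hemisphere side to $J_{\updelta,R}''(0^+)\ge 0$ on the transition side. This is repaired by a tiny mollification of $L''$ near this point, performed exactly as in the $S_\updelta,T_\updelta$ step of the proof of \lref{Buildingblock1}; the correction is local enough not to affect any estimate.

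For a warped product $dt^2+L^2\,d\theta^2$ the sectional curvatures are $-L''/L$ and $(1-L'^2)/L^2$. On the hemisphere both equal $1/\underline R^2>0$, so $\sigma_-\equiv 0$ there. On the transition $L''\ge 0$ and $|L'|\le 1$, hence $\sigma_-=L''/L$; the pointwise bound $\sigma_-\le C(n)$ then follows from $L\ge \underline R>1/\log R$ together with $J_{\updelta,R}''\le \sup\ell''/\log R$, as in \lref{Buildingblock1}. For the integral bound,
$$\int_{\R^n}\sigma_-^{\,n/2}\,\dv_{g_{\underline R,R}}=\sigma_{n-1}\int_0^{R^{1+\updelta}}(J_{\updelta,R}''(\tau))^{n/2}\,L(\tau+\pi\underline R/2)^{n/2-1}\,d\tau,$$
and the plan is to split this integral at $\tau=\underline R$: on $[0,\underline R]$ bound $L\le 2\underline R$ and use $\ell''(\tau)\le C/\tau$, on $[\underline R,R^{1+\updelta}]$ bound $L\le 2J_{\updelta,R}(\tau)\le 2\ell(\tau)/\log R$ and then apply the very estimate of \lref{Buildingblock1}. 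Combined, these give the target $C(n)(\log R)^{1-n/2}$.

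Finally, choose $\updelta>0$ small enough that $R^{1+\updelta}\le 2R$; then $r=\pi\underline R/2+R^{1+\updelta}\in(R,2R+\pi\underline R)$, and since the $g_{\underline R,R}$-distance from the origin to a point of radial coordinate $t$ is exactly $t$, the diameter of $\{t\le r\}$ is at most $2r\le 4R+\pi\underline R\le 2\pi(\underline R+R)$. The conformal factor $\uppsi_{\underline R,R}$ is then produced by the coupled equations $e^{\uppsi(r)}r=L(t)$, $e^{\uppsi(r)}\,dr=dt$ used at the end of the proof of \lref{Buildingblock1}, with boundary value $\uppsi\equiv 0$ on the Euclidean end; the inequality $L'\le 1$ makes $\uppsi$ non-increasing in the Euclidean radius, hence $\uppsi\ge 0$ everywhere, and the normalisation places the non-trivial support of $\uppsi$ inside $\bB(R)$. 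The step I expect to be most delicate is the integral estimate above: the shift by $\underline R-1/\log R$ introduces a factor of order $\underline R^{n/2-1}$ in the small-$\tau$ regime, and controlling this by $C(n)(\log R)^{1-n/2}$ hinges on the interplay between the assumption $\underline R>1/\log R$ and the decay $\ell''(\tau)\sim 1/\tau$ of Gallot's logarithmic comparison function.
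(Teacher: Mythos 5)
Your plan is close in spirit to the paper's, and the general architecture (spherical cap, then a transition built from Gallot's function $\ell$, then a Euclidean linear tail, and finally the conformal factor read off from the radial ODE $e^{\uppsi}r = L$, $e^{\uppsi}dr = dt$) is exactly right. There are, however, two genuine gaps.

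First, the additive shift $L(t) = \underline R - \tfrac{1}{\log R} + J_{\updelta,R}(t - \tfrac{\pi}{2}\underline R)$ is not benign for the integral bound. The integrand is $(J_{\updelta,R}'')^{n/2}\,L^{\,n/2-1}$, and for $\underline R$ large the constant offset dominates, giving a factor of order $\underline R^{\,n/2-1}$: your proposed split at $\tau=\underline R$ controls the small-$\tau$ part by $(2\underline R)^{n/2-1}\int(\ell''/\log R)^{n/2}$, which is of size $\underline R^{\,n/2-1}(\log R)^{-n/2}$, and this is not $\lesssim (\log R)^{1-n/2}$ once $\underline R \gg (\log R)^{2/(n-2)}$. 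Likewise the claimed bound $L \le 2\,\ell(\tau)/\log R$ on $[\underline R, R^{1+\updelta}]$ requires $\underline R - \tfrac{1}{\log R} \le \ell(\tau)/\log R$, which fails for a wide range of $\underline R$ because $\ell(\underline R)/\log R \approx \underline R\log\underline R/\log R < \underline R$ whenever $\underline R < R$. The lemma is asserted uniformly for all $\underline R \in (1/\log R, +\infty)$, so this is a real problem. The paper avoids it by \emph{not} shifting $j_R$: it determines $\tau\in(0,R)$ from $\underline R$ via $\underline R = \ell(\tau)/\sqrt{\log^2 R - \ell'(\tau)^2}$, then glues the profile $L(t)=\underline R\sin(t/\underline R)$ on $[0,\theta\underline R]$ (with $\theta\in(\pi/2,\pi)$ defined by $\underline R\sin\theta=\ell(\tau)/\log R$ and $-\cos\theta=\ell'(\tau)/\log R$) directly to $L(t)=j_R(t-\theta\underline R-\tau)$ beyond, so that value and first derivative match and the non-spherical warping function is literally $j_R$ on $[-\tau,\infty)$ with $\tau<R$. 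The integral of $\sigma_-^{n/2}$ then reduces verbatim to estimate~(\ref{majinn2}) of Lemma~\ref{Buildingblock1} with no extra $\underline R$-dependence.

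Second, even setting aside the integral issue, mollifying $L''$ at the junction $t=\tfrac{\pi}{2}\underline R$ necessarily perturbs the metric on some interval $[\tfrac{\pi}{2}\underline R - \epsilon, \tfrac{\pi}{2}\underline R]$, so the third bullet (that $[0,\tfrac{\pi}{2}\underline R]\times\bS^{n-1}$ is \emph{isometric} to a round hemisphere of radius $\underline R$) would only hold approximately. The paper's choice of gluing point $\theta\underline R$ with $\theta>\pi/2$ strictly is precisely what leaves room to perform the mollification entirely inside $t>\tfrac{\pi}{2}\underline R$, so the hemisphere survives unscathed. Your construction, as written, cannot produce the exact hemisphere of the statement.
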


\begin{proof}[Proof of \lref{Buildingblock2}] Let $R\ge 3$ .
Let $R\ge 3$. Let $\tau\in (0,R\,)$, define $\underline{R}$ by 
$$\underline{R}=\frac{\ell(\tau)}{\sqrt{\log^2(R)-(\ell'(\tau))^2}}.$$
It is easy to show that $\tau\mapsto \underline{R}$ is increasing between  $1/\log R$ and $+\infty$.
So that any $\underline{R}> 1/\log R$ determines a unique $\tau\in (0,R)$.\par
Let  $\theta\in (\pi/2,\pi)$ be defined by 
$$\left\{\begin{array}{l}
\underline{R}\sin\left(\theta \right)=\frac{\ell(\tau)}{\log R}\\ \\\
-\cos\left(\theta\right)=\frac{\ell'(\tau)}{\log R}\end{array}
\right.$$

\begin{enumerate}
\item On $\left[0,\theta \underline{R}\right]$, we let $$L_{\underline{R},R}(t)=\underline{R}\sin\left(\frac{t}{\underline{R}}\right).$$
\item On $\left[\theta \underline{R},+\infty\right)$, we let 
$$L_{\underline{R},R}(t)=j_{R}\left(t- \theta \underline{R}-\tau\right).$$
 \end{enumerate}
 By construction, $L_{\underline{R},R}$ is smooth on $(0,+\infty)\setminus\{\theta \underline{R}, R+\tau+\theta\underline{R}\}$ and $\cC^1$ on $(0,+\infty)$.  
 We introduce the warped product metric on $\R^n$:
 $$g_{\underline{R},R}=(dt)^2+L^2_{\underline{R},R}(t)d\theta^2.$$
  It is easy to show that  
 that the sectional curvature  of $g_{\underline{R},R}$ is uniformly bounded from below and that
 $$\int_{ \left((0,+\infty)\setminus\{\theta \underline{R}, R+\tau+\theta\underline{R}\}\right)\times \bS^{n-1}} \sigma^{\frac n2}_-(g_R)\dv_{g_R}\le C(n)\left(\log R\right)^{1-\frac n2}.$$
If we let $r=R+\tau+\theta\underline{R}$, then by definition $([r,+\infty)\times \bS^{n-1},g_{\underline{R},R} )$ is isometric to $\left(\R^n\setminus \bB(\rho(R)), \eucl\right)$.
As before, one can smooth the metric $g_{\underline{R},R}$ while keeping the geometric properties.
\end{proof}
\subsection{Proof of \tref{Exvol}} Let $R\ge 9$. We can find in $\bB(0,4R)\setminus \bB(0,R)$ $N(R)$ disjoint balls $\bB(x_i,2\sqrt{R})$, with $$c_n \left( \log R \right)^{\frac n2 -1}\le N(R)\le C_n \left( \log R \right)^{\frac n2 -1}.$$
 When consider the function $f_R(x)=\sum_i \uppsi_{R,\sqrt{R}}(x-x_i)$. 
 By construction the conformal metric $g_R=e^{2f_R} \mathrm{eucl}$ satisfies:
  $$\int_{ \R^n} \sigma_-(g_R)\dv_{g_R}\le C(n) \left( \log R \right)^{\frac n2 -1}\times \left(\log R\right)^{1-\frac n2}\le C(n).$$
Moreover, the $g_R$-diameter of the Euclidean ball $\bB(0,4R)$ is less that 
$4R+2\pi(R+\sqrt{R})\le 20R$ hence 
 $$\vol_{g_R}\left( B(0, 20R)\right)\ge c_n \left( \log R \right)^{\frac n2 -1}\frac{ \sigma_n}{2} R^n.$$

\endproof

\subsection{Conformal metric on $\R^n$} The same idea leads easily to the following examples:
\begin{thm}Let $n\ge 3$. For any sequence $\left(a_k\right)_{k\in\N}$ such that
$$\sum_k a_k k^{1-\frac n2}<\infty$$
there is a complete conformal metric $g=e^{2f}\mathrm{eucl}$ whose sectional curvature is bounded from below and such that

$$\int_{\R^n}  \sigma_-(g)^{\frac n2} \dv_{g}<\infty$$ and such that
for all $k\in \N:$
$$\vol B(o,2^k)\ge c(n) a_k \left( 2^k\right)^n$$

\end{thm}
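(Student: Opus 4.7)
The plan is to iterate the construction of \tref{Exvol}, installing at each dyadic scale $2^k$ a batch of bubbles from \lref{Buildingblock2} whose number and parameters are tuned to $a_k$. The hypothesis $\sum_k a_k k^{1-n/2}<\infty$ forces $a_k=o(k^{n/2-1})$, so the number of bubbles at each scale will stay polynomial in $k$; this is essential for controlling the logarithmic factors in the curvature integral.

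For each $k$ with $a_k$ above a threshold $T=T(n)$, I set $N_k=\lceil a_k\rceil$, hemisphere radius $\underline{R}_k=c_n 2^k$, and Euclidean support radius $R_k=c'_n 2^k N_k^{-1/n}$, where $c_n,c'_n$ are small dimensional constants. I place $N_k$ disjoint Euclidean balls $\bB(x_i^{(k)},2R_k)$ inside a thin annulus $A_k\subset\bB(0,2^k)$ of volume comparable to $2^{kn}$; the packing is possible since $N_k R_k^n\lesssim (c'_n)^n 2^{kn}$. The annuli at different scales are chosen so that the bubble neighborhoods are disjoint, so $f:=\sum_{k,i}\uppsi_{\underline{R}_k,R_k}(\cdot-x_i^{(k)})$ is a smooth nonnegative function; then $g=e^{2f}\eucl$ is a smooth complete metric with $\sigma_-(g)\le C(n)$ globally by \lref{Buildingblock2}. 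For $k$ with $a_k<T$ I install no bubble; the unperturbed Euclidean part of $B_g(o,2^k)$ alone then supplies volume $\ge c(n)a_k 2^{kn}$.

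The curvature integral splits bubble by bubble into
\begin{equation*}
\int_{\R^n}\sigma_-(g)^{n/2}\dv_g\le C(n)\sum_{k:\,a_k\ge T} N_k\,(\log R_k)^{1-n/2}.
\end{equation*}
Since $\log N_k=O(\log k)$, for large $k$ one has $\log R_k\ge \tfrac12 k\log 2$, and $1-n/2<0$ then gives $(\log R_k)^{1-n/2}\le C(n)k^{1-n/2}$. Hence the sum is bounded by $C(n)\sum_k a_k k^{1-n/2}<\infty$.

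For the volume lower bound, every scale-$k$ hemisphere must sit inside $B_g(o,2^k)$: the $g$-distance from $o$ to a hemisphere tip is bounded by the Euclidean distance $|x_i^{(k)}|$, plus the detour around smaller-scale bubbles, plus the hemisphere height $\pi\underline{R}_k/2$. A first-moment calculation shows that the detour along a ray from $o$ to a random point of $A_j$ has expected value of order $(c'_n)^n 2^j$, summing to $O((c'_n)^n 2^k)$ over $j\le k$. By a Markov/greedy argument one confines the bubble centers $x_i^{(k)}$ to the sub-region of $A_k$ whose rays have detour at most a constant multiple of this expectation, losing only a small fraction of available volume; the polynomial bound on $N_k$ ensures that this sub-region still accommodates all $N_k$ bubbles. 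Choosing $c_n,c'_n$ small then makes the total $g$-distance $\le 2^k$, so the $N_k$ hemispheres contribute $\tfrac{\sigma_n}{2}N_k\underline{R}_k^n\ge c(n)a_k 2^{kn}$ to $\vol_g B(o,2^k)$. Controlling this detour estimate simultaneously for all $N_k$ chosen directions is the main new ingredient beyond \tref{Exvol}.
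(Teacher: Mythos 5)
Your construction follows the paper's intended strategy (iterate \tref{Exvol} across dyadic scales, installing $\approx a_k$ bubbles from \lref{Buildingblock2} at scale $2^k$), and the bookkeeping is sound: $N_k\lesssim a_k$, $\log N_k = O(\log k)$ gives $\log R_k\gtrsim k$, so $\sum_k N_k(\log R_k)^{1-n/2}\lesssim\sum_k a_k k^{1-n/2}<\infty$, and $N_k$ hemispheres of radius $c_n 2^k$ yield $\gtrsim a_k 2^{kn}$ volume. The place where you diverge from the paper is the choice of support radius. You take $R_k\sim 2^k N_k^{-1/n}$, which packs the bubbles so tightly that a single ray can in the worst case meet $\sim N_k^{1/n}$ of them and suffer a detour of order $2^k$; this forces you to introduce a first-moment/Markov argument showing that a positive fraction of directions have total detour $\le\epsilon 2^k$, and then to place all bubbles along those good directions. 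That argument is essentially right (the expected detour from scale $j$ really is $\sim(c'_n)^n 2^j$, summing to $\lesssim(c'_n)^n 2^k$, and Markov plus a packing count finishes it), and you correctly identify it as the only new ingredient.

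However, this extra machinery is avoidable, and the paper's \tref{Exvol} already points to the simpler route: keep the support radius $R_k\sim\sqrt{2^k}$ as there, with hemisphere radius $c_n 2^k$. Then each bubble still contributes $\lesssim(\log\sqrt{2^k})^{1-n/2}\sim k^{1-n/2}$ to the curvature integral (same conclusion), but the bubbles are so sparse that even the \emph{worst-case} ray meets at most $N_j\lesssim k^{n/2-1}$ of them at scale $j$, each contributing a detour $\lesssim\sqrt{2^j}$, for a total detour $O\bigl(k^{n/2-1}\sqrt{2^k}\bigr)=o(2^k)$. No probabilistic selection of centers is needed, and every hemisphere lands inside $B_g(o,2^k)$ after an innocuous constant adjustment. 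So both approaches are valid; yours buys tighter packing at the cost of the Markov step, while the sparse-packing version of \tref{Exvol} makes the detour control trivial.

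Two small points you should address explicitly. First, \lref{Buildingblock2} requires $R\ge 3$ and $\underline{R}>1/\log R$; with $R_k=c'_n 2^k N_k^{-1/n}$ and $c'_n$ small, the first constraint fails for small $k$. This is harmless because $\int\sigma_-^{n/2}\dv_g$ is scale invariant, so you can rescale the model metric to any desired support radius while keeping the same curvature bound, but you should say so. Second, for the scales with $a_k<T$ you invoke the Euclidean part of $B_g(o,2^k)$; since $f\ge 0$ implies $g\ge\eucl$ and hence $B_g(o,2^k)\subset\bB(o,2^k)$, the volume lower bound there is \emph{not} immediate from monotonicity of the volume form, and in fact still uses the same detour estimate (to see that, say, $\bB(o,2^{k-1})$ restricted to good directions lies inside $B_g(o,2^k)$). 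This is fine under your Markov argument, but the phrase \lq\lq the unperturbed Euclidean part alone supplies volume\rq\rq\ glosses over it.
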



\begin{thebibliography}{Am56}
\bibitem{Ag} S. Agmon, On positivity and decay of solutions of second order elliptic equations on Riemannian manifolds, in {\em Methods of functional analysis and theory of elliptic equations} (Naples, 1982), 19--52, Liguori, Naples, 1983.
\bibitem{ACT} C-L. Aldana, G. Carron, S. Tapie: $A_\infty$ weights and compactness of conformal metrics under $L^{n/2}$ curvature bounds, arXiv 1810.05387.
\bibitem{Aubry} E. Aubry: 
Bounds on the volume entropy and simplicial volume in Ricci curvature $L^p$-bounded from below. 
{\em Int. Math. Res. Not.} (2009) \textbf{10}, 1933--1946. 
\bibitem{BHS} M. Bonk, J. Heinonen, E. Saksman, Logarithmic potentials, quasiconformal flows, and $\mathbf{Q}$-curvature, {\em  Duke Math. J. } \textbf{142} (2) (2008) 197--239.

\bibitem{BHS2} M. Bonk, J. Heinonen, E. Saksman, The quasiconformal Jacobian problem, {\em In the tradition of Ahlfors and Bers}, III, 77--96, 
Contemp. Math., 355, Amer. Math. Soc., Providence, RI, 2004
\bibitem{Cast} P. Castillon: An inverse spectral problem on surfaces.
{\em Comment. Math. Helv.} \textbf{81} (2006), no. 2, 271--286.
 \bibitem{carma}G. Carron: Stabilit\'e isop\'erim\'etrique, \emph{Math. Annalen} \textbf{306} (1996), 323--340.
 \bibitem{CarronRMI}G. Carron: Riesz transform on manifolds with quadratic curvature decay,  {\em  Rev. Mat. Iberoam.} \textbf{33} (2017), no. 3, 749--788.
 \bibitem{Carron2016} G. Carron, Geometric inequalities for manifolds with Ricci curvature in the Kato class, arXiv:1612.03027, to appear in \emph{Annales de l'Institut Fourier}.

\bibitem{CH} C. Carron and M. Herzlich: The Huber theorem for non-compact conformally flat manifolds,\emph{ Comment Math. Helv.} \textbf{77} (2002), 192--220.
\bibitem{Costea} S. Costea: Strong $A_\infty$-weights and Sobolev capacities in metric measure spaces. \emph{Houston J. Math.} \textbf{35} (2009), no. 4, 1233--1249.
\bibitem{CHS} T. Coulhon, I. Holopainen,L. Saloff-Coste:
Harnack inequality and hyperbolicity for subelliptic $p$-Laplacians with applications to Picard type theorems. 
{\em Geom. Funct. Anal.} \textbf{11} (2001), no. 6, 1139--1191.
 \bibitem{C_JLMS} T.Coulhon:
Off-diagonal heat kernel lower bounds without Poincar\'e.  
{\em J. London Math. Soc.} \textbf{68} (2003), no. 3, 795--816.
\bibitem{DS} G. David, S. Semmes, Strong $A_\infty$ weights, Sobolev inequalities and quasiconformal mappings, in: {\em Analysis and Partial Differential Equations}, Lect. Notes Pure Appl. Math., vol. 122, Dekker, New York, 1990, pp. 101--111.

 \bibitem{DS}E.B. Davies, B. Simon: $L^p$ norms of noncritical Schr\"odinger semigroups, {\em J. Funct. Anal.} \textbf{102} (1991), no. 1, 95--115.
 
 
\bibitem{DevKato} B.~Devyver:  Heat kernel and Riesz transform of Schr\"odinger operators. (2015) arXiv:1503.00510.
 
\bibitem{dCPeng} M. do Carmo, C. K. Peng: Stable complete minimal surfaces in $\R^3$ are planes, \emph{Bull. of the AMS} \textbf{1} (1979), 6 903Ð906, 
 \bibitem{Gallot}S. Gallot: Isoperimetric inequalities based on integral norms of Ricci curvature, {\em AstŽrisque}, No. 157-158 (1988) 191--216.
\bibitem{GriBAMS}
A. Grigor'yan: Analytic and geometric background of recurrence and non-explosion of the Brownian motion on Riemannian manifolds, {\em  Bull. Amer. Math. Soc.} \textbf{36} (1999), no. 2, 135--249.
\bibitem{Grisurvey2}
A. Grigor'yan: Heat kernels on metric measure spaces with regular volume growth. In Handbook of Geometric Analysis, No. 2 (L. Ji, P. Li, R. Schoen and L. Simon, eds.). Advanced Lectures in Math. vol.13. (2010), pp 1Ð60.
 \bibitem{Gromov}M. Gromov: Groups of polynomial growth and expanding maps. {\em Inst. Hautes \'Etudes Sci. Publ. Math.} \textbf{53} (1981), 53--73.

\bibitem{holo1} I. Holopainen:
A sharp $L^q$-Liouville theorem for $p$-harmonic functions.  
{\em Israel J. Math.} \textbf{115} (2000), 363--379. 

\bibitem{holo2} I. Holopainen:
Volume growth, Green's functions, and parabolicity of ends. 
{\em Duke Math. J.} \textbf{97} (1999), no. 2, 319--346.


\bibitem{KK} O.E. Kansanen, R. Korte: Strong $A_\infty$-weights are $A_\infty$-weights on metric spaces. {\em Revista Matem‡tica Iberoamericana}, \textbf{27} (2011), no 1, 335--354.
\bibitem{LY} P. Li, S.-T. Yau, Curvature and holomorphic mappings of complete KŠhler manifolds, {\em Compositio Math.} \textbf{73} (1990), 125Ð144.
\bibitem{MP} W.F. Moss and J. Piepenbrink: Positive solutions of elliptic equations. \emph{Pac. J. Math.} \textbf{75} (1978), 219--226.
\bibitem{PW} P. Petersen, G. Wei: Relative volume comparison with integral curvature bounds. {\em Geom. Funct. Anal.} \textbf{7} (1997), no. 6, 1031--1045.
\bibitem{SSY} R. Schoen, L. Simon,S-T. Yau:
Curvature estimates for minimal hypersurfaces. 
{\em Acta Math.} \textbf{134} (1975), no. 3-4, 275--288.
\bibitem{SemmesAin} S. Semmes:
Bilipschitz mappings and strong $A_\infty$ weights. {\em Ann. Acad. Sci. Fenn. Ser. AI Math.}, \textbf{18} (1993),no 2,  211--248.  
\bibitem{TV1} G. Tian, J. Viaclovsky:
Bach-flat asymptotically locally Euclidean metrics. 
{\em Invent. Math.} \textbf{160} (2005), no. 2, 357--415. 
\bibitem{TV2} G. Tian, J. Viaclovsky:
Moduli spaces of critical Riemannian metrics in dimension four.
{\em Adv. Math.} \textbf{196} (2005), no. 2, 346--372. 

\bibitem{WW} S. Wang, Y. Wang:
Integrability of scalar curvature and normal metric on conformally flat manifolds. 
{\em  J. Differential Equations} \textbf{265} (2018), no. 4, 1353--1370. 
\bibitem{W1} Y. Wang:
The isoperimetric inequality and quasiconformal maps on manifolds with finite total $\mathbf{Q}$-curvature. 
{\em Int. Math. Res. Not.} (2012), no. 2, 394--422.
\bibitem{Wang} Yi Wang: The isoperimetric inequality and $\mathbf{Q}$-curvature. {\em Adv. Math.} \textbf{281} (2015), 823--844.

 \end{thebibliography}
\end{document}